\theoremstyle{definition}
\newtheorem{definition}{Definition}[section]
\theoremstyle{plain}
\newtheorem{lemma}[definition]{Lemma}
\newtheorem{theorem}[definition]{Theorem}
\newtheorem{proposition}[definition]{Proposition}
\newtheorem{corollary}[definition]{Corollary}
\theoremstyle{remark}
\newtheorem{remark}[definition]{Remark}
\newtheorem{example}[definition]{Example}
\newcommand{\myLpt}{\operatorname{lpt}}
\newcommand{\myIso}{\operatorname{iso}}
\newcommand{\mycl}{\operatorname{cl}}
\newcommand{\myint}{\operatorname{int}}
\newcommand{\myrank}{\operatorname{rank}}
\begin{document}
	
	\title[D-minimal structure which defines every sequence]{There exists a d-minimal expansion of the $\mathbb R$-vector space over $\mathbb R$ which defines every sequence}
	\author[M. Fujita]{Masato Fujita}
	\address{Department of Liberal Arts,
		Japan Coast Guard Academy,
		5-1 Wakaba-cho, Kure, Hiroshima 737-8512, Japan}
	\email{fujita.masato.p34@kyoto-u.jp}
	
	\begin{abstract}
		There exists a d-minimal expansion of the $\mathbb R$-vector space over $\mathbb R$ which defines every sequence.
		In this paper, we prove this assertion and the following more general assertion:
		Let $\mathcal R$ be either the ordered $\mathbb R$-vector space structure over $\mathbb R$ or the ordered group of reals.
		A first-order expansion of $\mathcal R$ by a countable subset $D$ of $\mathbb R$ and a compact subset $E$ of $\mathbb R$ of finite Cantor-Bendixson rank is d-minimal if $(\mathcal R,D)$ is locally o-minimal.
	\end{abstract}
	
	\subjclass[2020]{Primary 03C64}
	
	\keywords{d-minimality; Cauchy sequence}
	
	\maketitle

\section{Introduction} \label{sec:intro}

The topologies made by sets definable in o-minimal structures are tame \cite{vdD}.
They have dimension functions satisfying simple formulas and any unary definable function is continuous except finitely many points.
They are a part of tame properties enjoyed by o-minimal structures.
Several relatives of o-minimal structures has been proposed;  for instance, weakly o-minimal structures \cite{MMS} and locally o-minimal structures \cite{TV}.
 They are excepted to have tame topologies which are slightly wilder than that of o-minimal structures.

D-minimal structures \cite{Miller-dmin} are ones of them.
Studies on tameness of d-minimal structures are found in \cite{Fornasiero,Miller-dmin,MT2,T,HM}.
Several examples of d-minimal structures are known such as \cite{vdD2,FM2,MT}.
Let us recall the definition of d-minimal structures.
\begin{definition}[\cite{Miller-dmin}]
	An expansion $\mathcal R$ of an ordered set of reals $(\mathbb R,<)$ is \textit{d-minimal} if, either one of (consequently, both of) the following equivalent conditions is satisfied:
	\begin{itemize}
		\item For every $\mathcal M \equiv \mathcal R$, every subset of $M$ definable in $\mathcal M$ is the union of an open set and finitely many discrete set, where $M$ is the universe of $\mathcal M$.
		\item For every nonnegative integer $n$ and definable set $A$ of $\mathbb R^{n+1}$, there exists a positive integer $N$ such that each $A_x:=\{y \in \mathbb R\;|\; (x,y) \in A\}$ either has a nonempty interior or is the union of at most $N$ discrete sets.  
	\end{itemize}
\end{definition}
In this paper, we only consider the case in which the universe is the set of reals $\mathbb R$.
A general definition of d-minimality is found in \cite{Fornasiero}.

The main theorem of this paper is as follows:
\begin{theorem}\label{thm:Caushy_version}
	The structure $(\mathbb R, <,+,0,\Lambda, E)$ is d-minimal, where $\Lambda$ is the set of all real scalar functions on $\mathbb R$, $E=\{a_n\;|\; n \in \mathbb N\}$ and $(a_n)_{n \in \mathbb N}$ is a Cauchy sequence.
\end{theorem}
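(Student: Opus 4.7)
My plan is to deduce Theorem \ref{thm:Caushy_version} from the more general d-minimality result announced in the abstract. The core observation is that the image of a Cauchy sequence differs from its closure by at most one point, and the closure is compact of Cantor--Bendixson rank at most $2$; so the main theorem will fall out as a special case once we verify the hypotheses of the general result and handle a small reduct step.

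First, I dispose of the degenerate case when $(a_n)$ is eventually constant: then $E$ is finite and $(\mathbb R,<,+,0,\Lambda,E)$ is a trivial o-minimal expansion, hence d-minimal. Otherwise, let $a=\lim_{n\to\infty}a_n$ and set $E'=E\cup\{a\}$. Any point $b\neq a$ lies in some neighbourhood meeting only finitely many $a_n$, so the set of accumulation points of $E'$ is exactly $\{a\}$; in particular $E'$ is closed and bounded, hence compact, and its Cantor--Bendixson rank is $2$, which is finite.

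Next, I apply the general theorem with $\mathcal R=(\mathbb R,<,+,0,\Lambda)$, countable predicate $D=\emptyset$, and compact predicate $E'$. Since $(\mathcal R,\emptyset)=\mathcal R$ is the ordered $\mathbb R$-vector space over $\mathbb R$, it is o-minimal and therefore locally o-minimal, so the hypothesis of the general theorem is satisfied, yielding d-minimality of $(\mathbb R,<,+,0,\Lambda,E')$. To transfer this back to $E$, I observe that $(\mathbb R,<,+,0,\Lambda,E)$ and $(\mathbb R,<,+,0,\Lambda,E')$ are interdefinable: in either structure the limit $a$ is first-order definable as the unique $x$ such that every open interval containing $x$ meets the predicate in at least two points, so $E'=E\cup\{a\}$ and $E=E'\setminus\{a\}$ (or $E=E'$ if $a\in E$) are uniformly definable from one another. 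Using the second (uniform) characterisation of d-minimality, which only quantifies over definable subsets of $\mathbb R^{n+1}$, the bound $N$ transfers across interdefinable structures without issue, and the theorem follows.

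The genuine difficulty lies entirely in the general theorem, not in this corollary: one must understand how definable sets in the combined expansion decompose into an open piece and finitely many discrete pieces uniformly in parameters, which is presumably where the finiteness of the Cantor--Bendixson rank of $E'$ and the local o-minimality of $(\mathcal R,D)$ are combined. I would expect the hard step there to be proving a uniform bound on the number of discrete pieces for definable families, most likely by an induction on Cantor--Bendixson rank, stratifying $E'$ by its successive derived sets and at each stage reducing to a locally o-minimal situation. The present deduction itself is then the short check of hypotheses and the reduct argument sketched above.
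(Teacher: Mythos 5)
Your proposal is correct and follows essentially the same route as the paper: adjoin the limit $c=\lim_n a_n$ to obtain a compact set $E\cup\{c\}$ that is a union of two discrete sets, apply Theorem \ref{thm:semi-d-minimal} (with $D=\emptyset$, which is fine since $\mathcal R$ itself is o-minimal, hence locally o-minimal), and conclude by interdefinability of $(\mathcal R,E)$ and $(\mathcal R,E\cup\{c\})$. Your extra details (the explicit first-order definition of the limit point and the separate treatment of eventually constant sequences) are harmless refinements of what the paper dismisses as obvious.
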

We simply call a subset of $\mathbb R$ a \textit{Cauchy sequence} by abuse of terminology when this set is of the form $\{a_n\;|\; n \in \mathbb N\}$, where $(a_n)_{n \in \mathbb N}$ is a Cauchy sequence.

The above theorem is a corollary of a more general assertion proved in this paper.
In order to describe the assertion, we need some more preparation. 
Firstly, we need the definition of locally o-minimal structures.
\begin{definition}
	An expansion of dense linear order without endpoints $(M,<,\ldots)$ is \textit{locally o-minimal} if, for every definable subset $X$ of $M$ and for every point $a\in M$, there exists an open interval $I$ such that $a \in I$ and $X \cap I$ is a finite union of points and 
	open intervals \cite{TV}.
\end{definition}

We next recall the notation used in \cite{FM}.
\begin{definition}
	Let $\mathcal R$ be an o-minimal expansion of $(\mathbb R,<,+)$ and $E$ be a subset of $\mathbb R$.
	Let $(\mathcal R,E)^{\#}$ denote the structure $(\mathcal R, (S))$, where $S$ ranges over all nonempty subsets of all Cartesian products $E^k$ ($k$ ranges over all positive integers). 
	It is obvious that $(\mathcal R,E)$ is a reduct of $(\mathcal R,E)^{\#}$. 
\end{definition}

We are now ready to introduce the second main theorem of this paper.
\begin{theorem}\label{thm:semi-d-minimal}
	Let $\mathcal R$ be either $(\mathbb R,<,+,0)$ or $(\mathbb R, <,+,0,\Lambda)$, where $\Lambda$ is the set of all real scalar functions on $\mathbb R$.
	Let $D$ be a countable subset of $\mathbb R$ such that $(\mathcal R,D)$ is locally o-minimal.
	Let $E$ be a compact subset of $\mathbb R$ which is a union of finitely many discrete sets. 
	Then $(\mathcal R,D \cup E)^{\#}$ is d-minimal. 
\end{theorem}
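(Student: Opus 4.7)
The plan is to deduce the conclusion from two separate facts: first, that $D\cup E$ is countable; second, that the unsharp expansion $(\mathcal R,D\cup E)$ is itself locally o-minimal. Granting both, one invokes a sharp-transfer principle in the spirit of \cite{FM} --- namely, that whenever $F\subseteq\mathbb R$ is countable and $(\mathcal R,F)$ is locally o-minimal, the structure $(\mathcal R,F)^{\#}$ is d-minimal --- and the theorem follows by taking $F=D\cup E$. Countability of $D\cup E$ is automatic, since a discrete subset of $\mathbb R$ is countable, $E$ is a finite union of discrete sets, and $D$ is countable by hypothesis.

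The substantive content is to verify local o-minimality of $(\mathcal R,D\cup E)$, which I would prove by induction on the Cantor--Bendixson rank $n$ of $E$. In the base case, $E$ is finite (being discrete and compact), and adjoining finitely many constants to $(\mathcal R,D)$ preserves local o-minimality. For the inductive step, let $E'$ denote the derived set of $E$, which is compact of rank $n-1$; by the inductive hypothesis $(\mathcal R,D\cup E')$ is locally o-minimal. The residue $F:=E\setminus E'$ is a discrete closed subset of the open set $\mathbb R\setminus E'$, so at any point $p\notin E'$ the set $F$ is locally finite and local o-minimality of $(\mathcal R,D\cup E)$ at $p$ is immediate.

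The main obstacle is local o-minimality of $(\mathcal R,D\cup E)$ at a limit point $p\in E'$, where the points of $F$ cluster toward $p$. I plan to use local o-minimality of $(\mathcal R,D\cup E')$ to choose an open interval $I\ni p$ meeting $E'$ only in $\{p\}$, so that $F\cap I$ is a discrete sequence with unique accumulation point $p$. A $(\mathcal R,D\cup E)$-definable subset of $I$ then arises as a section of a $(\mathcal R,D\cup E')$-definable family indexed by finitely many parameters from $F\cap I$; combining local o-minimality of $(\mathcal R,D\cup E')$ with the one-point accumulation of $F$ in $I$ and the uniform finiteness bounds afforded by o-minimality of $\mathcal R$, one forces only finitely many combinatorial types of such sections to appear inside a sufficiently small subinterval around $p$. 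This compactness-and-uniformity argument is the technical heart of the theorem; once it is in hand, the sharp-transfer step mentioned above concludes the proof.
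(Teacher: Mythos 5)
Your reduction rests on the claim that the unsharp expansion $(\mathcal R, D\cup E)$ is locally o-minimal, and that claim is false whenever $E$ is infinite --- which is the whole point of the theorem. An infinite compact $E$ has an accumulation point $p$, and the predicate set $D\cup E$ is itself definable, countable (hence with empty interior), and infinite in every neighborhood of $p$; so no interval around $p$ meets $D\cup E$ in a finite union of points and open intervals. Already for $E=\{1/n \mid n\geq 1\}\cup\{0\}$ and $D=\emptyset$, the structure $(\mathbb R,<,+,0,E)$ is d-minimal (this is Theorem \ref{thm:Caushy_version}) but \emph{not} locally o-minimal at $0$. For countable sets, local o-minimality forces discreteness and closedness, whereas d-minimality only asks for a finite union of discrete sets; a convergent sequence together with its limit is exactly the gap between the two notions. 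Consequently your induction on the Cantor--Bendixson rank is attempting to prove a false statement: the base case ($E$ finite) is fine, but the inductive step must fail, and it fails precisely at the point you flag as ``the main obstacle'' --- at $p\in E'$ no choice of interval $I$ can repair the fact that $F\cap I$ accumulates at $p$ while being definable and countable. No uniformity or compactness argument can overcome this, because the obstruction is the definition of local o-minimality itself, not a combinatorial bookkeeping issue.

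The ``sharp-transfer principle'' you invoke (countable $F$ with $(\mathcal R,F)$ locally o-minimal implies $(\mathcal R,F)^{\#}$ d-minimal) is, as it happens, essentially derivable from the paper's Proposition \ref{prop:dmin_suff}: for these two choices of $\mathcal R$ every linear image $v(F^m)$ with coefficients in $\mathbb K$ is definable in $(\mathcal R,F)$, hence discrete and closed by countability plus local o-minimality, which is exactly the hypothesis of that proposition. But you would still need the Friedman--Miller machinery behind Proposition \ref{prop:dmin_suff} to justify it, and in any case the principle does not apply here because its hypothesis fails. The paper's actual route bypasses local o-minimality of $(\mathcal R,D\cup E)$ entirely: it verifies the hypothesis of Proposition \ref{prop:dmin_suff} directly, proving by induction on $\sum_i N_i$ (Lemma \ref{lem:basic1}) that $v(\prod_i E_i)$ is compact and a union of boundedly many discrete sets for compact factors, and then splicing in the countable locally o-minimal part $D$ separately (Lemma \ref{lem:basic11}), where discreteness and closedness of the $D$-contribution \emph{is} obtained from local o-minimality of $(\mathcal R,D)$ --- a hypothesis imposed only on $D$, not on $D\cup E$. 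To salvage your approach you would have to replace ``locally o-minimal'' by a condition that tolerates accumulation, which in effect brings you back to the uniform bounds on the number of discrete sets that the paper establishes.
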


Hieronymi and Walsberg proposed a classification scheme for expansions of the ordered group of reals in \cite{HW}.
Under this classification scheme, the structure $(\mathcal R,D \cup E)^{\#}$ is categorized in `type A, affine' by \cite[Theorem A]{HW} and the following theorem:

\begin{theorem}\label{thm:non_field_type}
	Let $\mathcal R$, $D$ and $E$ be as in Theorem \ref{thm:semi-d-minimal}.
	Let $U$ be an open subset of $\mathbb R^n$ and $f:U \to \mathbb R^m$ be a continuous function definable in $(\mathcal R, D \cup E)^{\#}$. 
	Then there exists a dense open subset $V$ of $U$ definable in $(\mathcal R, D \cup E)^{\#}$ such that the restriction of $f$ to $V$ is locally affine. 
\end{theorem}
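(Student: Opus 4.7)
The plan is to combine the d-minimality of $(\mathcal R, D \cup E)^{\#}$ established in Theorem~\ref{thm:semi-d-minimal} with the fact that continuous functions definable in $\mathcal R$ are piecewise affine. Since $\mathcal R$ is either the ordered group of reals or the ordered $\mathbb R$-vector space over $\mathbb R$, unary continuous definable functions in $\mathcal R$ are piecewise affine, and the same lifts to several variables by a standard cell-decomposition argument. The thesis is therefore to reduce, on a large open subset of $U$, definability in $(\mathcal R, D \cup E)^{\#}$ to definability in $\mathcal R$ alone.

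The key step I would try to establish is a \emph{localization lemma}: for every continuous function $g \colon I \to \mathbb R$ on an open interval, definable in $(\mathcal R, D \cup E)^{\#}$, there is a dense open definable $V \subseteq I$ such that every point of $V$ admits an open neighbourhood on which $g$ coincides with a function definable in $\mathcal R$. The heuristic is that $D \cup E$ is nowhere dense, so a continuous function that ``genuinely used'' the new predicate on a whole open interval would, after an analysis of its graph via d-minimality, produce a definable subset of $\mathbb R^2$ inconsistent with the finite-Cantor--Bendixson-rank structure of $E$.

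Given this lemma in one variable, I would handle the multivariable case by induction on $n$. At a generic point of $U$, applying the localization lemma slicewise in each coordinate yields a product box on which $f$ is separately definable in $\mathcal R$; continuity then upgrades separate $\mathcal R$-definability to joint $\mathcal R$-definability, and the piecewise affineness of $\mathcal R$-definable functions gives joint local affineness on that box. Collecting such boxes over all generic points produces the desired dense open definable set $V$.

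The main obstacle will be the localization lemma, and in particular the role of the compact set $E$. Its Cantor--Bendixson stratification $E \supsetneq E' \supsetneq E'' \supsetneq \cdots$ produces finitely many layers of accumulation points, and I would proceed by induction on the Cantor--Bendixson rank. At each inductive step, d-minimality should rule out the possibility that a continuous function nontrivially uses the predicate for the next deeper layer on an open set; the base case, where the predicate in play is discrete, reduces to the locally o-minimal setting supplied by the hypothesis that $(\mathcal R, D)$ is locally o-minimal, together with the behaviour of the $\#$-construction on discrete parameter sets.
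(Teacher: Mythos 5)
Your high-level intuition (pass, on a dense open set, to $\mathcal R$-definability and then invoke piecewise affineness of $\mathcal R$-definable functions) matches the spirit of the paper, but both of your key steps have genuine gaps. First, the localization lemma is essentially the whole content of the theorem and you do not prove it: ``d-minimality should rule out the possibility that a continuous function nontrivially uses the predicate'' is a hope, not an argument, and the proposed induction on the Cantor--Bendixson stratification of $E$ is never carried out. The paper obtains the needed structure not from such an induction but from the Friedman--Miller decomposition of $(\mathcal R, D\cup E)^{\#}$-definable sets into finite unions of sets $\bigcup_{\alpha}\bigcap_{u \in P_{\alpha}} C_u$ with $C$ a weak cell (Proposition \ref{prop:FM-main}); analysing which weak cells can have singleton fibres yields Lemma \ref{lem:func_char}, i.e.\ that any definable function satisfies $f(x)=g_i(x)+d$ with $g_i$ linear and $d$ ranging over a definable set $D_i$ of empty interior. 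One then still needs definable choice (Proposition \ref{prop:Miller}(3)) to select $d=\tau(x)$ definably, and Lemma \ref{lem:locally_constant} (resting on Lemma \ref{lem:basic0}) to conclude that a definable map into a nowhere dense set is locally constant on a dense open set. None of these ingredients appears in your sketch, and without them the localization claim is unsupported.

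Second, your reduction of the multivariable case to the one-variable case is broken. From ``each coordinate slice of $f$ agrees locally with an $\mathcal R$-definable (hence affine) function'' you cannot conclude joint local affineness or joint $\mathcal R$-definability: $f(x,y)=xy$ is continuous and separately affine in each variable on every box, yet not affine, so continuity does not upgrade separate affineness (or separate definability) to joint affineness. Of course $xy$ is not definable in the structures at hand, but your argument never uses that fact; it would apply verbatim to any continuous separately affine function, so it cannot be sound as stated. There is also a uniformity issue you do not address: the dense open subsets produced slice by slice must be assembled into a single definable dense open subset of $U$, which requires a parametrised version of the one-variable statement. The paper avoids both problems by running the entire argument directly in $n$ variables, where the only reduction performed is the harmless projection onto each of the $m$ coordinates of the target.
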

Here, a map $f:U \to \mathbb R^m$ is called \textit{locally affine} if, for every point $x \in U$, there exists an open neighborhood $V$ of $x$ in $U$, the restriction of $f$ to $V$ coincides with the restriction of an affine map to $V$.

The paper is organized as follows:
In Section \ref{sec:FM_results}, we introduce a sufficient condition for an o-minimal expansion of $(\mathbb R,<,+,0)$ by a subset $E$ of $\mathbb R$ to be d-minimal.
It is deduced from the results in \cite{FM}.
We also recall several basic properties of d-minimal expansions of the ordered group of reals in this section.
Section \ref{sec:main} is a main part of this paper.
We prove Theorem \ref{thm:semi-d-minimal} and Theorem \ref{thm:non_field_type} in this section.
A simple question related to Theorem \ref{thm:semi-d-minimal} is what occurs if we drop the assumption that $E$ is compact. 
In Section \ref{sec:counterexamples}, we give examples which indicate that the expansion is not necessarily d-minimal if the compactness assumption is dropped.
We give concluding remarks in Section \ref{sec:concluding}.

In the last of this section, we introduce the terms and notations used in this paper.
Throughout, `definable' means `definable with parameters.'
For a subset $X$ of $\mathbb R^n$, the notations $\myint(X)$ and $\mycl(X)$ denote the interior and the closure of $X$ in $\mathbb R^n$, respectively.

\section{Results by Friedman and Miller}\label{sec:FM_results}
Friedman and Miller provides a sufficient condition for an o-minimal structure over $\mathbb R$ by a subset $E$ of $\mathbb R$ to satisfy the condition that a definable subset of $\mathbb R$ with empty interior is discrete in \cite{FM}.
We can easily get a sufficient condition for it to be d-minimal by using the facts proven in \cite{FM}.
The initial purpose of this section is to introduce the sufficient condition and prove it.

We start with trivial lemmas.

\begin{lemma}\label{lem:basic0}
	Let $X$ be a subset of a topological space $T$.
	If $X$ is a union of finitely many discrete subsets of $T$, then $X$ has an isolated point.
\end{lemma}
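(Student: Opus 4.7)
The natural approach is induction on the number $n$ of discrete sets in the decomposition $X = D_1 \cup \dots \cup D_n$. The base case $n=1$ is immediate: any point of the discrete set $D_1 = X$ is isolated in $X$ by the definition of discreteness in $T$.

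For the inductive step, I would first reduce to the case where $D_1$ is nonempty and disjoint from $D_2 \cup \dots \cup D_n$, by replacing $D_1$ with $D_1 \setminus (D_2 \cup \dots \cup D_n)$ and applying the induction hypothesis to $D_2 \cup \dots \cup D_n$ if $D_1$ becomes empty. Fix $x \in D_1$ and choose, by discreteness, an open set $U \subseteq T$ with $U \cap D_1 = \{x\}$. If $U \cap X = \{x\}$, then $x$ itself is isolated in $X$ and we are done. Otherwise, $Y := U \cap (D_2 \cup \dots \cup D_n)$ is a nonempty union of $n-1$ discrete subsets of $T$ (since any subset of a discrete set is discrete), so by the induction hypothesis $Y$ has an isolated point $y$, witnessed by an open set $V \subseteq T$ with $V \cap Y = \{y\}$.

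It remains to lift $y$ to an isolated point of $X$ itself. Because $D_1$ is disjoint from the other $D_i$'s we have $y \neq x$, so by $T_1$ separation (automatic in the intended applications, where ultimately $T \subseteq \mathbb R^n$) one can shrink $V$ to exclude $x$ and then intersect with $U$, obtaining an open set $V' \ni y$. Any hypothetical point of $D_1 \cap V'$ would lie in $U \cap D_1 = \{x\}$ and is excluded by construction, while any point of $(D_2 \cup \dots \cup D_n) \cap V'$ lies in $V \cap Y = \{y\}$, so $V' \cap X = \{y\}$ as required. The only mildly delicate step is this final separation of $y$ from $x$, but it is routine in any $T_1$ space, and in particular in the Euclidean settings that the paper actually needs; I would therefore simply invoke $T_1$ without further comment.
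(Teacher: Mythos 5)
Your proof is correct and is essentially the same induction as the paper's, just run in the opposite order: the paper applies the induction hypothesis to $S_1\cup\dots\cup S_{n-1}$ first and then, if the resulting isolated point $x$ fails to be isolated in $X$, picks a nearby point $x'$ of the last discrete set $S_n$ and intersects the two isolating neighborhoods; you instead pick a point of the first discrete set and, if it fails, apply the induction hypothesis to the remaining $n-1$ sets cut down to a neighborhood. The one substantive difference is the separation step you flag at the end. You are right that it is needed: in both arguments one must prevent the ``old'' point from sneaking into the final neighborhood of the ``new'' point, and without some separation the statement is actually false as literally stated (in a two-point indiscrete space, each singleton is a discrete subset but their union has no isolated point). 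The paper's proof silently passes over this --- it asserts $X\cap(N\cap N')=\{x'\}$ without ruling out $x\in N'$ --- whereas you explicitly shrink the neighborhood using $T_1$, which is harmless since the lemma is only ever applied to subsets of $\mathbb R$. So your write-up is, if anything, slightly more careful than the original; no gap.
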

\begin{proof}
	This seems to be a well known fact, but we give a complete proof here.
	
	Assume that $X$ is the union of $n$ many discrete subsets $S_1, \ldots, S_n$ of $T$.
	We prove the lemma by induction on $n$.
	The lemma is trivial when $n=1$.
	When $n>1$, the union $X':=\bigcup_{i=1}^{n-1}S_i$ has an isolated point, say $x$, by the induction hypothesis.
	There exists an open neighborhood $N$ of $x$ in $T$ such that $N \cap X'=\{x\}$.
	If $x$ is also isolated in $X=\bigcup_{i=1}^n S_i$, we have nothing to prove.
	Assume that $x$ is not isolated in $X$. 
	We can pick a point $x' \in S_n \cap N$ with $x \neq x'$.
	We can take an open neighborhood $N'$ of $x'$ in $T$ such that $N' \cap S_n=\{x'\}$ because $S_n$ is discrete.
	The intersection $N \cap N'$ is an open neighborhood of $x'$ and we have $X \cap (N \cap N')=\{x'\}$.
	This means that $x'$ is an isolated point in $X$.    
\end{proof}

\begin{lemma}\label{lem:basic2}
	Let $X$ be a subset of a topological space $T$ and $N$ be a positive integer.
	If $X$ is the union of $N$ discrete subsets of $T$, any subset of $X$ is the union of at most $N$ discrete subsets of $T$.
\end{lemma}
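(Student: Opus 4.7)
The plan is to write $X = S_1 \cup \cdots \cup S_N$ with each $S_i$ discrete in $T$, take an arbitrary subset $Y \subseteq X$, and slice it as $Y = \bigcup_{i=1}^N (Y \cap S_i)$. This gives a decomposition of $Y$ into $N$ pieces, so the only thing to verify is that each piece $Y \cap S_i$ is discrete in $T$.

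For that, I would use the basic observation that a subset of a discrete subset of $T$ is again discrete in $T$. Concretely, fix $i$ and a point $x \in Y \cap S_i$. Since $S_i$ is discrete in $T$, there is an open neighborhood $U$ of $x$ in $T$ with $U \cap S_i = \{x\}$. Since $Y \cap S_i \subseteq S_i$, we get $U \cap (Y \cap S_i) \subseteq U \cap S_i = \{x\}$, and $x$ itself lies in this intersection, so $U \cap (Y \cap S_i) = \{x\}$. Thus $Y \cap S_i$ is discrete in $T$.

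Combining these two steps yields that $Y$ is the union of at most $N$ discrete subsets of $T$ (with the qualifier ``at most'' rather than ``exactly'' because some of the intersections $Y \cap S_i$ may be empty). There is no real obstacle here; the only care needed is to state discreteness relative to $T$ rather than relative to the subspace, since that is the hypothesis actually used in the rest of the paper. The argument is entirely elementary and does not require induction on $N$, unlike the preceding Lemma \ref{lem:basic0}.
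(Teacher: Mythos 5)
Your proposal is correct and matches the paper's argument, which simply notes that a subset of a discrete set is again discrete; you have merely written out the neighborhood verification that the paper leaves as obvious. No issues.
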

\begin{proof}
	The lemma is obvious because a subset of a discrete set is again discrete.
\end{proof}

In \cite{FM}, Friedman and Miller introduce the collections of subsets of $\mathbb R^n$ called $\mathcal S_n$ and $\mathcal T_n$.
We only need the definition of $\mathcal S_n$ in this paper.

\begin{definition}\label{def:weak_cell}
	Let $\mathcal R$ be an o-minimal expansion of the ordered set of reals.
	A subset $E$ of $\mathbb R$ is called \textit{sparse} (with respect to $\mathcal R$) if, for each positive integer $n$ and each function $f:\mathbb R^n \to \mathbb R$ definable in $\mathcal R$, the image $f(E^n)$ has an empty interior.

	A subset $X$ is called a \textit{weak cell} if $X$ is definable in $\mathcal R$ and one of the following forms:
	\begin{enumerate}
		\item[(i)] $B \times \mathbb R$;
		\item[(ii)] $\{(x,t) \in \mathbb R^{n+1}\;|\; x \in B \text{ and } f(x)=t\}$;
		\item[(iii)] $\{(x,t) \in \mathbb R^{n+1}\;|\; x \in B \text{ and } f(x)<t\}$;
		\item[(iv)] $\{(x,t) \in \mathbb R^{n+1}\;|\; x \in B \text{ and } t<g(x)\}$;
		\item[(v)] $\{(x,t) \in \mathbb R^{n+1}\;|\; x \in B \text{ and } f(x)<t<g(x)\}$,
	\end{enumerate}
	where $B \subseteq \mathbb R^n$ and $f,g:\mathbb R^n \to \mathbb R$ are definable in $\mathcal R$.
	
	Let $E$ be a sparse subset of $\mathbb R$.
	Consider a subset $A$ of $\mathbb R^{n+1}$.
	We say that $A \in \mathcal S_{n+1}$ if there exist a natural number $m$, a weak cell $C \subseteq \mathbb R^{m+n+1}$ and indexed family $(P_{\alpha})_{\alpha \in I}$ of subsets of $E^m$ such that $$A=\bigcup_{\alpha \in I} \bigcap_{u \in P_{\alpha}} C_u.$$
	Here, we consider $\mathbb R^0$ is a singleton.
\end{definition}

We collect two assertions from \cite{FM}.

\begin{proposition}\label{prop:FM-main}
	Consider an o-minimal expansion of the ordered set of reals $\mathcal R$ and a sparse subset $E$ of $\mathbb R$.
	Assume that, for every $m$ and every function $f:\mathbb R^n \to \mathbb R$ definable in $\mathcal R$, the closure of the image $f(E^m)$ is nowhere dense.
	Every subset of $\mathbb R^{n+1}$ definable in $(\mathcal R, E)^{\#}$ is a finite union of elements in $\mathcal S_{n+1}$.
\end{proposition}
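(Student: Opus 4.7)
The plan is to induct on the construction of formulas defining subsets of $\mathbb R^{n+1}$ in $(\mathcal R, E)^{\#}$, showing that the class of finite unions of elements of $\mathcal S_{n+1}$ is closed under Boolean combinations and existential projection. For the base case, any $\mathcal R$-definable subset of $\mathbb R^{n+1}$ is a finite union of weak cells by o-minimal cell decomposition, and each weak cell belongs to $\mathcal S_{n+1}$ via the choice $m = 0$, in which case $E^0$ is a singleton and $\bigcap_{u \in E^0} C_u = C$. An atomic predicate of the form $(t_1(x), \ldots, t_m(x)) \in S$, with $S \subseteq E^m$ and $\mathcal R$-definable terms $t_i$, is realized by taking the weak cell $C = \{(u, x) : t_i(x) = u_i \text{ for all } i\}$, index set $I = S$, and singletons $P_\alpha = \{\alpha\}$.

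Next I would verify closure of the target class under finite Boolean combinations. Unions are built into the outer $\bigcup_\alpha$. Binary intersections follow by distributivity, writing
\[
\Bigl(\bigcup_\alpha \bigcap_{u \in P_\alpha} C_u\Bigr) \cap \Bigl(\bigcup_\beta \bigcap_{v \in Q_\beta} D_v\Bigr) = \bigcup_{(\alpha, \beta)} \bigcap_{(u, v) \in P_\alpha \times Q_\beta} (C_u \cap D_v),
\]
and decomposing each $C_u \cap D_v$ via an o-minimal cell decomposition in $\mathbb R^{m + m' + n + 1}$ compatible with both weak cells. Complementation is handled by refining a cell decomposition compatible with $C$, expressing the complement as a finite union of weak cells, and combining with DeMorgan across the index family.

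The main obstacle is closure under the existential quantifier, that is, the projection $\pi \colon \mathbb R^{n+2} \to \mathbb R^{n+1}$. Given $A = \bigcup_\alpha \bigcap_{u \in P_\alpha} C_u \in \mathcal S_{n+2}$ with $C \subseteq \mathbb R^{m + n + 2}$ a weak cell, one must rewrite
\[
\pi(A) = \bigcup_\alpha \bigl\{ x \in \mathbb R^{n+1} : \exists t \in \mathbb R,\ \forall u \in P_\alpha,\ (u, x, t) \in C \bigr\}
\]
in the prescribed form. The key split is by the type of weak cell $C$: when $C$ has type (i), (iii), (iv), or (v), the fiber $C_u$ contains an open $t$-interval when nonempty, and the existential projection reduces to an o-minimal projection onto the $(m + n + 1)$-dimensional base, producing new weak cells that keep the index structure intact; when $C$ has type (ii), the condition forces $t = f(x, u)$ for an $\mathcal R$-definable $f$, and here the hypothesis that $\mycl(f(E^m))$ is nowhere dense is used to absorb the resulting graph condition into the parameter tuple by passing from $E^m$ to $E^{m+1}$ and adjusting $(P_\alpha)$ accordingly. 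Carrying out this case split uniformly over the index family, and bounding the number of $\mathcal S_{n+1}$-summands produced, is the technical heart of the argument and precisely where the sparsity and nowhere-density hypotheses on $E$ are essential.
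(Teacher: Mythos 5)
The paper does not actually prove this proposition: it is imported verbatim from Friedman and Miller's Theorem A in \cite{FM} together with its proof, so a self-contained argument has to reproduce the substance of that paper. Your skeleton (induction on formulas, closure under Boolean operations and projection) has the right shape, but the two steps carrying all the difficulty are exactly the ones your proposal gets wrong or leaves open. First, complementation: De Morgan turns $\bigcup_{\alpha}\bigcap_{u\in P_\alpha}C_u$ into $\bigcap_{\alpha}\bigcup_{u\in P_\alpha}(C^c)_u$, an intersection of a possibly infinite family of infinite unions. Redistributing this back into the form $\bigcup\bigcap$ with the inner index ranging over a subset of $E^{m}$ for a \emph{fixed} $m$ is not a formal manipulation --- done naively it yields an intersection indexed by choice functions in $\prod_\alpha P_\alpha$, which is not of the required shape. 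This is precisely why Friedman and Miller introduce a dual class $\mathcal T_n$ alongside $\mathcal S_n$ and run a simultaneous induction; your phrase ``combining with DeMorgan across the index family'' hides the entire problem. A related (repairable) slip occurs in your intersection step: decomposing $C_u\cap D_v$ into a finite union of cells reintroduces an intersection of finite unions inside $\bigcap_{(u,v)}$, which again fails to distribute; what saves you there is that the conjunction of two weak-cell conditions in the same last variable is a \emph{single} weak cell (using $\max$ and $\min$ of definable functions), a point you need but do not make.

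Second, projection. For weak cells of types (i), (iii), (iv), (v) you assert that the existential projection ``reduces to an o-minimal projection onto the base, producing new weak cells that keep the index structure intact.'' This is false as stated: $\pi\bigl(\bigcap_{u\in P_\alpha}C_u\bigr)$ is the set of $x$ satisfying $\exists t\,\forall u$, whereas $\bigcap_{u}\pi(C_u)$ is the set satisfying $\forall u\,\exists t$, and these do not coincide. Concretely, for type (v) the projection is the set of $x$ with $\sup_{u\in P_\alpha}f(u,x)<\inf_{u\in P_\alpha}g(u,x)$ (modulo attainment), and since $P_\alpha$ is an arbitrary subset of $E^m$ these envelopes are not $\mathcal R$-definable functions of $x$; showing such sets are still finite unions of $\mathcal S_{n+1}$-elements is where sparseness and nowhere-density enter for these cell types as well, not only for type (ii). Since you explicitly defer ``the technical heart'' and the steps you do sketch contain these errors, the proposal is a plan rather than a proof; to make it honest you should either cite \cite{FM} as the paper does, or import its $\mathcal S_n$/$\mathcal T_n$ machinery in full.
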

\begin{proof}
	The proposition follows from \cite[Theorem A]{FM} and its proof.
\end{proof}
Note that it is not proven that an element in $\mathcal S_{n+1}$  is definable in $(\mathcal R,E)^{\#}$ in \cite{FM}.

\begin{lemma}\label{lem:FM-claim}
	Let $\mathcal R$ and $E$ be as in Proposition \ref{prop:FM-main}.
	Let $A \in \mathcal S_{n+1}$.
	Suppose that, for every $x \in \mathbb R^n$, the fiber $A_x:=\{y \in \mathbb R\;|\; (x,y) \in A\}$ has empty interiors.
	Then there exists a natural number $m$ and a function $f:\mathbb R^{m+n} \to \mathbb R$ definable in $\mathcal R$, such that the inclusion $A_x \subseteq \mycl(f(E^m \times \{x\}))$ holds for every $x \in \mathbb R^n$.
\end{lemma}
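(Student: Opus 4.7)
The plan is to unpack the definition of $\mathcal S_{n+1}$ and then run a case analysis on the type of the underlying weak cell. By definition we may write
\[
A \;=\; \bigcup_{\alpha \in I}\, \bigcap_{u \in P_\alpha} C_u
\]
for some natural number $m$, weak cell $C \subseteq \mathbb R^{m+n+1}$, and family $(P_\alpha)_{\alpha \in I}$ of (necessarily nonempty) subsets of $E^m$. The function $f$ produced for the conclusion of the lemma will be read off from the shape of $C$ in Definition \ref{def:weak_cell}, so the same $f$ works uniformly in $x$.

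Fix $x \in \mathbb R^n$ and analyze the fiber $(C_u)_x = \{t \in \mathbb R : (u,x,t) \in C\}$ according to the type of $C$. In types (i), (iii), and (iv), $(C_u)_x$ is either empty, or all of $\mathbb R$, or an open half-line, so the intersection $\bigcap_{u \in P_\alpha}(C_u)_x$ is either empty or contains a nontrivial open half-line; in particular it has nonempty interior whenever nonempty. Since every subset of $A_x$ inherits empty interior, such intersections must be empty and $A=\emptyset$, making the conclusion vacuous in these cases. Type (ii) is essentially immediate: $(C_u)_x$ is either empty or the singleton $\{f(u,x)\}$, so any intersection lies in $\{f(u_0,x)\}$ for any chosen $u_0 \in P_\alpha$, giving $A_x \subseteq f(E^m \times \{x\}) \subseteq \mycl(f(E^m \times \{x\}))$.

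The subtle case is type (v), where $(C_u)_x$ is either empty or the open interval $(f(u,x), g(u,x))$. A short direct computation shows that the intersection $\bigcap_{u \in P_\alpha}(C_u)_x$ is either empty, has nonempty interior, or collapses to a single point $\{s\}$; the last possibility occurs exactly when $s = \sup_{u \in P_\alpha} f(u,x) = \inf_{u \in P_\alpha} g(u,x)$ with neither extremum attained (otherwise $s$ would be excluded from the corresponding open interval, and the intersection would be empty). In that situation $s$ is a genuine limit point of $\{f(u,x) : u \in P_\alpha\}$ approached from below, so $s \in \mycl(f(E^m \times \{x\}))$. Taking the union over $\alpha$ and combining with the easier cases delivers the required inclusion.

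The main obstacle is precisely this collapsing-interval subcase of type (v): one might naively hope for the cleaner statement $A_x \subseteq f(E^m \times \{x\})$ without taking closures, but a nested family of open intervals $(f(u,x), g(u,x))$ can intersect in a single point $s$ that is only a limit of the $f(u,x)$-values, which is exactly why the $\mycl$ appears in the conclusion. The careful bookkeeping about whether suprema and infima are attained is the part of the argument that needs the most attention.
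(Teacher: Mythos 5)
Your proof is correct, and it is essentially the argument the paper relies on: the paper itself gives no proof here beyond citing the claim on p.~62 of Friedman--Miller, and your case analysis on the weak-cell types --- discarding (i), (iii), (iv) because any nonempty intersection of such fibers has nonempty interior, reading off $f(u_0,x)$ directly in case (ii), and in case (v) identifying the singleton fiber with the non-attained supremum $s=\sup_{u\in P_\alpha}f(u,x)$, hence a point of $\mycl(f(E^m\times\{x\}))$ --- is exactly that argument, which the author also replays later in the proof of Lemma \ref{lem:func_char}. Nothing is missing.
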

\begin{proof}
	See the claim in \cite[p.62]{FM} .
\end{proof}

We now get the following theorem:
\begin{theorem}\label{thm:FM_results}
	Let $\mathcal R$ be an o-minimal expansion of $(\mathbb R,<,+)$.
	Let $E$ be a sparse subset of $\mathbb R$.
	Assume that, for each $m,n \in \mathbb N$ and $f:\mathbb R^{m+n} \to \mathbb R$ definable in $\mathcal R$, there exists $N \in \mathbb N$ such that the closure of $f(E^m \times \{x\})$ is the union of at most $N$ discrete sets for each $x \in \mathbb R^n$.
	Then, the structure $(\mathcal R,E)^{\#}$ is d-minimal.
\end{theorem}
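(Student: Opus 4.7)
The plan is to verify the second characterization of d-minimality: for every definable $A \subseteq \mathbb{R}^{n+1}$, there exists $N \in \mathbb{N}$ such that each fiber $A_x$ either has nonempty interior or is the union of at most $N$ discrete sets. I would reduce, via Proposition \ref{prop:FM-main}, to the case that $A$ lies in $\mathcal{S}_{n+1}$, and then run a case analysis on the type of the weak cell generating $A$.

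The first step is to verify the nowhere-density hypothesis of Proposition \ref{prop:FM-main}. Specializing the theorem's hypothesis to $n=0$ gives that $\mycl(f(E^m))$ is a finite union of discrete subsets of $\mathbb{R}$ for each definable $f\colon \mathbb{R}^m \to \mathbb{R}$. A discrete subset of $\mathbb{R}$ is nowhere dense (an isolated point of $D$ cannot be an interior point of $\mycl(D)$), and a finite union of closed nowhere dense subsets of $\mathbb{R}$ remains nowhere dense by the Baire category theorem, applied to the decomposition $\mycl(f(E^m)) = \mycl(D_1) \cup \cdots \cup \mycl(D_N)$. Hence Proposition \ref{prop:FM-main} applies, and every definable subset of $\mathbb{R}^{n+1}$ in $(\mathcal{R}, E)^{\#}$ is a finite union of elements of $\mathcal{S}_{n+1}$.

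The heart of the argument is the analysis of a single $A = \bigcup_{\alpha \in I}\bigcap_{u \in P_\alpha} C_u \in \mathcal{S}_{n+1}$, with $C \subseteq \mathbb{R}^{m+n+1}$ a weak cell. If $C$ is of type (i), (iii), (iv), or (v), each fiber $(C_u)_x$ is either empty or a specific open subset of $\mathbb{R}$ (namely $\mathbb{R}$, a half-line, or a bounded open interval), and the intersections appearing in the definition of $\mathcal{S}_{n+1}$ collapse to single open sets because $\bigcap_u (f(u,x), \infty) = (\sup_u f(u,x), \infty)$ and similarly for the other forms. Thus $A_x$ is a union of open sets, hence open, so it is either empty or has nonempty interior. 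If $C$ is of type (ii), the graph of some definable $f$, then each $(C_u)_x$ is a singleton $\{f(u,x)\}$ or empty, and intersections of singletons are singletons or empty, so $A_x \subseteq f(E^m \times \{x\}) \subseteq \mycl(f(E^m \times \{x\}))$. The theorem's hypothesis bounds this closure by a uniform number $N$ of discrete sets, and Lemma \ref{lem:basic2} passes this bound to $A_x$.

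Finally, for a general definable $A = A_1 \cup \cdots \cup A_K$ with $A_i \in \mathcal{S}_{n+1}$, I would let $J$ be the indices corresponding to type-(ii) weak cells and set $N := \sum_{i \in J} N_i$. For each $x$, either $(A_i)_x$ is nonempty for some $i \notin J$ and then $A_x$ contains an open set, or $A_x = \bigcup_{i \in J} (A_i)_x$ is a union of at most $N$ discrete sets. I expect the main obstacle to be the case analysis on weak cells; in particular one must observe that although arbitrary intersections of open sets need not be open, the specific intersections of half-lines and bounded open intervals arising in types (iii)--(v) reduce to single open sets via suprema and infima of definable functions.
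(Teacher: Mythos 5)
Your overall skeleton (apply Proposition \ref{prop:FM-main} to reduce to a single $A\in\mathcal S_{n+1}$, then bound the empty-interior fibers by $\mycl(f(E^m\times\{x\}))$ and invoke the hypothesis plus Lemma \ref{lem:basic2}) is the paper's; the difference is that the paper delegates the entire fiber analysis to Lemma \ref{lem:FM-claim} (the claim from Friedman--Miller), after first replacing $A$ by the definable set $\{(x,y)\in A\;|\;\myint(A_x)=\emptyset\}$, whereas you redo that analysis by hand. Your verification of the nowhere-density hypothesis of Proposition \ref{prop:FM-main} and your treatment of types (i)--(iv) and (ii) are fine (modulo the harmless point that $\bigcap_u\bigl(f(u,x),\infty\bigr)$ is $[\,\sup_u f(u,x),\infty)$ rather than $(\sup_u f(u,x),\infty)$ when the supremum is not attained).

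The gap is in type (v). The set $\bigcap_{u\in P_\alpha}\{t\;|\;f(u,x)<t<g(u,x)\}$ is the interval of all $t$ with $t>f(u,x)$ and $t<g(u,x)$ for every $u$, and when $s:=\sup_{u\in P_\alpha}f(u,x)=\inf_{u\in P_\alpha}g(u,x)$ with neither extremum attained, this intersection is the singleton $\{s\}$ -- nonempty with empty interior. Since $I$ is an arbitrary index set, $A_x=\bigcup_{\alpha\in I}\{s_\alpha\}$ can be an infinite set with empty interior coming from a type-(v) cell, so your dichotomy ``type (ii) versus open fibers'' fails and your bound $N=\sum_{i\in J}N_i$ over only the type-(ii) indices is not justified. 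The repair is exactly the content of Lemma \ref{lem:FM-claim}: each such $s_\alpha$ is the supremum of $f(P_\alpha\times\{x\})\subseteq f(E^m\times\{x\})$, hence lies in $\mycl(f(E^m\times\{x\}))$, so the degenerate type-(v) fibers are also controlled by the hypothesis of the theorem (compare the paper's own use of this observation in the proof of Lemma \ref{lem:func_char}). Either cite Lemma \ref{lem:FM-claim} as the paper does, or add this argument explicitly; as written, your case analysis would let type-(v) cells contribute empty-interior fibers that escape your count.
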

\begin{proof}
	The theorem is almost proven in \cite{FM}.
	In the proof, the word `definable' means `definable in $(\mathcal R,E)^{\#}$'.
	
	Let $X$ be a definable subset of $\mathbb R^{n+1}$.
	We have only to show that there exists $N>0$ such that $X_x:=\{y \in \mathbb R\;|\; (x,y) \in X\}$ either has a nonempty interior or is the union of at most $N$  discrete sets for each $x \in \mathbb R^n$.
	We may assume that $X_x$ has an empty interior for each $x \in \mathbb R^n$ by replacing $X$ with $\{(x,y) \in X\;|\; X_x \text{ has an empty interior}\}$.
	By Proposition \ref{prop:FM-main}, every definable subsets of $\mathbb R^{n+1}$ is the union of finitely many elements of $\mathcal S_{n+1}$.
	We may assume that $X$ is an element of $\mathcal S_{n+1}$ without loss of generality.
	
	Since $X$ is an element of $\mathcal S_{n+1}$ and $X_x$ has an empty interior for each $x \in \mathbb R^n$, Lemma \ref{lem:FM-claim} implies that there exist a nonnegative integer $m$ and a map $f:\mathbb R^{m+n} \to \mathbb R$ definable in $\mathcal R$ such that $X_x$ is contained in the closure $\mycl(f(E^m \times \{x\}))$ of $f(E^m \times \{x\})$ in $\mathbb R$ for each $x \in \mathbb R^n$.
	
	By the assumption, there exists a natural number $N$ such that the closure $\mycl(f(E^m \times \{x\}))$ is the union of at most $N$ discrete sets.
	Since $X_x$ is contained in $\mycl(f(E^m \times \{x\}))$, the fiber $X_x$ is also the union of at most $N$ discrete sets by Lemma \ref{lem:basic2}.
\end{proof}

We consider the cases in which the o-minimal structure $\mathcal R$ in Theorem \ref{thm:FM_results} is either the $\mathbb R$-vector space structure over $\mathbb R$  or $(\mathbb R, <, +, 0)$.
Using Theorem \ref{thm:FM_results}, we can prove the following proposition:
\begin{proposition}\label{prop:dmin_suff}
	Let $\mathcal R$ be the o-minimal structure and $\mathbb K$ be  the field given in either (1) or (2) below.
	\begin{enumerate}
		\item[(1)] $\mathcal R=(\mathbb R, <, +, 0)$ and $\mathbb K=\mathbb Q$.
		\item[(2)] Let $\mathcal R$ be the $\mathbb R$-vector space structure over $\mathbb R$ and $\mathbb K=\mathbb R$.
	\end{enumerate}
	Let $E$ be a subset of $\mathbb R$.
	The expansion $(\mathcal R,E)^{\#}$ is d-minimal if, for any nonnegative integer $n$ and any linear map $v:\mathbb R^n \to \mathbb R$ with coefficients in $\mathbb K$, the image $v(E^n)$ is closed and a finite union of discrete subsets of $\mathbb R$.
\end{proposition}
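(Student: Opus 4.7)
The plan is to apply Theorem \ref{thm:FM_results} to the structure $(\mathcal R, E)^{\#}$. To do so I must verify that $E$ is sparse with respect to $\mathcal R$ and that, for every $m,n$ and every $f:\mathbb R^{m+n}\to\mathbb R$ definable in $\mathcal R$, there is a uniform bound $N$ such that $\mycl(f(E^m\times\{x\}))$ is the union of at most $N$ discrete sets for all $x\in\mathbb R^n$.

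The key structural input is that, in both case (1) and case (2), every definable function is piecewise affine with linear coefficients in $\mathbb K$. Concretely, I would invoke cell decomposition in $\mathcal R$ to get a finite partition $\mathbb R^{m+n}=\bigsqcup_{i=1}^{k}C_i$ into definable cells together with $\mathbb K$-linear maps $v_i:\mathbb R^{m}\to\mathbb R$ and $w_i:\mathbb R^{n}\to\mathbb R$ and constants $c_i\in\mathbb R$ such that $f(u,x)=v_i(u)+w_i(x)+c_i$ on $C_i$. For case (1) this is the classical piecewise-linear description of sets definable in the ordered divisible abelian group of reals; for case (2) the endomorphisms in $\Lambda$ merely enlarge the scalar field from $\mathbb Q$ to $\mathbb R$.

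Granting this, fix $x\in\mathbb R^{n}$ and set $F_i(x)=\{u\in\mathbb R^m:(u,x)\in C_i\}$. Then
\begin{align*}
f(E^m\times\{x\})
&=\bigcup_{i=1}^{k}\bigl(v_i(E^m\cap F_i(x))+w_i(x)+c_i\bigr)\\
&\subseteq\bigcup_{i=1}^{k}\bigl(v_i(E^m)+w_i(x)+c_i\bigr).
\end{align*}
By hypothesis, each $v_i(E^m)$ is closed in $\mathbb R$ and the union of at most $N_i$ discrete sets, and a translate preserves both properties. Hence the right-hand side is closed, so it contains $\mycl(f(E^m\times\{x\}))$, and Lemma \ref{lem:basic2} gives that $\mycl(f(E^m\times\{x\}))$ is the union of at most $N:=\sum_i N_i$ discrete sets. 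Crucially, $N$ depends only on $f$ (via the cells and the $v_i$) and not on $x$, which is exactly what Theorem \ref{thm:FM_results} demands.

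Sparseness of $E$, together with the hypothesis of Proposition \ref{prop:FM-main} used inside Theorem \ref{thm:FM_results}, falls out of the same decomposition with $n=0$: for any definable $f:\mathbb R^n\to\mathbb R$, $f(E^n)$ is contained in a finite union of translates $v_i(E^n)+c_i$, each a finite union of discrete subsets of $\mathbb R$ and so of empty interior (and closed, hence nowhere dense). The only genuine obstacle is the invocation of piecewise-affineness with coefficients in $\mathbb K$; once that reduction is in place, the rest is bookkeeping plus Lemma \ref{lem:basic2}.
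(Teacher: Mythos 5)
Your proposal is correct and follows essentially the same route as the paper: reduce to Theorem \ref{thm:FM_results}, use the piecewise-affine description of functions definable in $\mathcal R$ (the paper cites \cite[Chapter 1, Corollary 7.6]{vdD} and its analogue for $(\mathbb R,<,+,0)$) to split $f$ on each piece as a $\mathbb K$-linear map in the $E$-variables plus an affine map in $x$, then bound $\mycl(f(E^m\times\{x\}))$ inside the closed set $\bigcup_i\bigl(v_i(E^m)+w_i(x)\bigr)$ and conclude with Lemma \ref{lem:basic2}. Your explicit check of sparseness is a small addition the paper leaves implicit, but it changes nothing substantive.
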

\begin{proof}
	Let $f:\mathbb R^{m+n} \to \mathbb R$ be a function definable in $\mathcal R$.
	We have only to prove that there exists a positive integer $N$ such that $\mycl_{\mathbb R}(f(E^m \times \{x\}))$ is the union of at most $N$  discrete sets for each $x \in \mathbb R^n$ by Theorem \ref{thm:FM_results}.
	
	When $\mathcal R$ is the $\mathbb R$-vector space structure over $\mathbb R$,
	by \cite[Chapter 1, Corollary 7.6]{vdD}, there exists a partition $\mathbb R^{m+n} =\bigcup_{i=1}^k A_i$ into basic semilinear sets such that the restriction $f|_{A_i}$ of $f$ to $A_i$ is affine for each $1 \leq i\leq k$.
	We can prove the assertion similar to \cite[Chapter 1, Corollary 7.6]{vdD} even when $\mathcal R=(\mathbb R, <, +, 0)$.
	In this case, each definable affine map has rational coefficients and real constant.
	
	Let $g_i$ be the affine function such that $f|_{A_i}=g_i|_{A_i}$.
	There exist a linear map $v_i:\mathbb R^m \to \mathbb R$ with coefficients in $\mathbb K$ and an affine map $w_i:\mathbb R^n \to \mathbb R$ such that $g_i=v_i+w_i$ for each $1 \leq i \leq k$.
	By the assumption, $v_i(E^m)$ is closed and it is a union of $N_i$ discrete sets for some positive integer $N_i$.
	The image $g_i(E^m \times \{x\})=v_i(E^m)+w_i(x)$ is closed and it is the union of at  most $N_i$ discrete sets.
	Set $N=\sum_{i=1}^k N_i$.
	Since $f(E^m \times \{x\}) \subseteq \bigcup_{i=1}^k g_i(E^m \times \{x\})$ and $\bigcup_{i=1}^k g_i(E^m \times \{x\})$ is closed, we have $\mycl(f(E^m \times \{x\})) \subseteq \bigcup_{i=1}^k g_i(E^m \times \{x\})$.
	The latter set $\bigcup_{i=1}^k g_i(E^m \times \{x\})$ is the union of at most $N$  discrete sets.
	Therefore, the subset $\mycl(f(E^m \times \{x\}))$ of $\bigcup_{i=1}^k g_i(E^m \times \{x\})$ is the union of at most $N$ discrete sets by Lemma \ref{lem:basic2}.
	We have finished the proof.
\end{proof}

We recall several basic facts on d-minimal structures in the last of this section.
They are collected from Miller's papers \cite{Miller-dmin} and \cite{Miller-choice}.
\begin{proposition}\label{prop:Miller}
	Consider a d-minimal expansion of the ordered group of reals.
	\begin{enumerate}
		\item[(1)] Let $A$ and $B$ be definable subsets of $\mathbb R^n$ with empty interiors.
		The union $A \cup B$ has an empty interior.
		\item[(2)] For any definable function $f:U \to \mathbb R$ on a definable open subset $U$ of $\mathbb R^n$, there exists a definable dense open subset $V$ of $U$ such that the restriction of $V$ is continuous.
		\item[(3)] Let $X$ be a definable subset of $\mathbb R^{m+n}$ and $\pi:\mathbb R^{m+n} \to \mathbb R^n$ be a coordinate projection.
		There exists a definable map $\tau:\pi(X) \to X$ such that the composition $\pi \circ \tau$ is the identity map on $\pi(X)$.
	\end{enumerate}
\end{proposition}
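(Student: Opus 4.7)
The plan is to split the proposition into its three parts. Parts (1) and (2) I would derive from the definition of d-minimality together with the tools already on hand, while part (3) is essentially a pointer to \cite{Miller-choice}.

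For (1), the key input is the second equivalent condition of d-minimality applied to the constant family $\{(x,y):y\in A\}\subseteq \mathbb R^n\times \mathbb R^n$ (and likewise for $B$): since $A$ and $B$ have empty interior, each is a finite union of discrete subsets of $\mathbb R^n$, so $A\cup B$ is a finite union of at most $N_A+N_B$ discrete sets. Assume for contradiction that $A\cup B$ contains a nonempty open set $V$. Each $S_i\cap V$ is discrete in $\mathbb R^n$ (discreteness is inherited under intersection), so $V$ itself is a finite union of discrete subsets of $\mathbb R^n$. By Lemma \ref{lem:basic0}, $V$ would then possess an isolated point in $\mathbb R^n$, which is impossible for a nonempty open subset of $\mathbb R^n$. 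The one subtlety to double-check is that d-minimality of $\mathcal R$ applies to subsets of $\mathbb R^n$ (not only $\mathbb R$); this is obtained by slicing and reducing to the one-variable case, and is precisely the content of the ``no interior $\Rightarrow$ finite union of discrete sets'' consequence of d-minimality that Miller records.

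For (2), the strategy is to show that the discontinuity locus $D_f\subseteq U$ of $f$ has empty interior, whence $V:=U\setminus \mycl(D_f)$ is a dense open definable subset on which $f$ is continuous. That $D_f$ has empty interior is proved by induction on $n$: for $n=1$ one argues directly from d-minimality of each unary slice and the fact that isolated points in the discontinuity set of a one-variable definable function are excluded by the second clause of d-minimality; for $n\geq 2$ one conditions on one coordinate at a time, combining Fubini-style arguments with part (1) to conclude that the union of the ``bad'' slices cannot cover an open set. Density of $V$ then follows from part (1) applied to $\mycl(D_f)\setminus D_f$, which is definable and has empty interior. This is the content of \cite[\S 2]{Miller-dmin}, which I would cite rather than reproduce in full.

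Part (3), definable choice, is the main obstacle and the reason the proposition names \cite{Miller-choice}. The plan is to reduce to the case $m=1$ by iterating on coordinates, and for $m=1$ to define $\tau(y)$ as either the leftmost endpoint of a maximal open interval of $X_y$ (when $X_y$ has nonempty interior) or, using Lemma \ref{lem:basic0}, as the minimal isolated point of $X_y$ in a suitably chosen bounded search window (when $X_y$ is a finite union of discrete sets). The delicate point is to make these choices \emph{uniformly} in $y$: one needs a definable way to produce the ``search window'' so that an isolated point is guaranteed to lie in it for every $y\in\pi(X)$, and this uniform construction is exactly what Miller carries out in \cite{Miller-choice}. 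Accordingly my plan is to invoke his result directly rather than re-derive it. The remaining work is only to verify that the hypotheses of his theorem apply to our d-minimal expansion of the ordered group of reals, which is immediate from the definition.
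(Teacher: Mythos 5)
The paper's own ``proof'' of this proposition is a pure citation to \cite{Miller-dmin} and \cite{Miller-choice}, so any honest attempt must either do the same or actually supply Miller's arguments. Your treatment of parts (2) and (3) is essentially the paper's (a pointer to Miller, with a plausible sketch of what he does), and that is fine. The problem is part (1), where you do attempt a self-contained argument, and the key step is false.

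You claim that, by d-minimality, a definable subset of $\mathbb R^n$ with empty interior is a finite union of discrete subsets of $\mathbb R^n$, and you dismiss this as a ``subtlety'' obtained ``by slicing.'' For $n\geq 2$ this is simply not true: the line $\{(x,x)\;|\;x\in\mathbb R\}\subseteq\mathbb R^2$ is definable in the ordered group of reals, has empty interior, and has no isolated points, so by the paper's own Lemma \ref{lem:basic0} it cannot be a finite union of discrete sets. The second clause in the definition of d-minimality only controls \emph{one-dimensional} fibers $A_x\subseteq\mathbb R$; it does not slice up to the statement you need. The correct higher-dimensional consequence of d-minimality, and the one Miller actually proves (the Main Lemma of \cite[Section 7]{Miller-dmin}), is that a definable subset of $\mathbb R^n$ with empty interior is \emph{nowhere dense}; part (1) then follows because a finite union of nowhere dense sets is nowhere dense. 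Your argument as written only establishes (1) for $n=1$. A related soft spot appears in your part (2): you deduce that $\mycl(D_f)$ has empty interior ``from part (1) applied to $\mycl(D_f)\setminus D_f$,'' but you never show that this frontier piece has empty interior; what you actually need is again ``empty interior implies nowhere dense,'' not the union statement. Since both repairs amount to citing the very results of Miller that the paper cites, the cleanest fix is to drop the flawed slicing claim and invoke \cite[Section 7, Main Lemma]{Miller-dmin} directly.
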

\begin{proof}
	The assertions are found in \cite[Section 7, Main Lemma]{Miller-dmin}, \cite[Section 8.3]{Miller-dmin} and \cite{Miller-choice}.
	The original assertions in Miller's paper are proven under more relaxed conditions.
	See the original papers for the original statements. 
\end{proof}

\section{Expansion of a compact set of finite Cantor-Bendixson rank}\label{sec:main}

We prove Theorem \ref{thm:semi-d-minimal} and Theorem \ref{thm:non_field_type} in this section.
We first recall the Cantor-Bendixson rank of a set and its basic properties.

\begin{definition}[\cite{FM2}]\label{def:lpt}
	We denote the set of isolated points in $S$ by $\myIso(S)$ for any topological space $S$.
	We set $\myLpt(S):=S \setminus \myIso(S)$.
	In other word, a point $x \in S$ belongs to $\myLpt(S)$ if and only if $x \in \mycl_S(S \setminus \{x\})$, where $\mycl_S(\cdot)$ denotes the closure in $S$.
	
	Let $X$ be a nonempty closed subset of a topological space $S$.
	We set $X\langle 0 \rangle=X$ and, for any $m>0$, we set $X \langle m \rangle = \myLpt(X \langle m-1 \rangle)$.
	We say that $\myrank(X)=m$ if $X \langle m \rangle=\emptyset$ and $X\langle m-1 \rangle \neq \emptyset$.
	We say that $\myrank X = \infty$ when $X \langle m \rangle \neq \emptyset$ for every natural number $m$.
	We call $\myrank X$ the \textit{Cantor-Bendixson rank} of $X$.
	We set $\myrank(Y):=\myrank(\mycl(Y))$ when $Y$ is a nonempty subset of $S$ which is not necessarily closed.
\end{definition}

\begin{lemma}\label{lem:very_basic}
	For a countable closed subset $A$ of $\mathbb R$, $\myrank(A)=k$ if and only if $k$ is the least number of discrete sets whose union is $A$.
\end{lemma}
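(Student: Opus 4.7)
My plan is to prove the two inequalities $\myrank(A) \leq k$ and $\myrank(A) \geq k$ separately. For the direction $\myrank(A) = k \Rightarrow A$ is a union of $k$ discrete sets, I would use the stratification
\[
A = \bigsqcup_{j=0}^{k-1} \myIso(A\langle j\rangle),
\]
which follows from $A\langle j\rangle \setminus A\langle j+1\rangle = \myIso(A\langle j\rangle)$ together with $A\langle k\rangle = \emptyset$. Each stratum is a discrete subset of $\mathbb{R}$: an open neighborhood of an isolated point of $A\langle j\rangle$ that meets $A\langle j\rangle$ in only that point meets the subset $\myIso(A\langle j\rangle) \subseteq A\langle j\rangle$ in only that point as well. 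Hence $A$ is a union of $k$ discrete sets, so the minimum number of such sets is at most $k$.

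For minimality I would establish the stronger auxiliary claim: for any subset $S \subseteq \mathbb{R}$ (not assumed closed) that is a union of $K$ discrete subsets of $\mathbb{R}$, the iterated operation satisfies $S\langle K\rangle = \emptyset$, where $S\langle j\rangle$ is defined by the same iteration of $\myLpt$ as in Definition \ref{def:lpt}. Applied to $A$, this forces $\myrank(A) \leq K$ for every decomposition of $A$ into $K$ discrete sets; together with the forward direction, the minimality of $k$ then pins $\myrank(A)$ exactly at $k$.

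The auxiliary claim is proved by induction on $K$. The base $K = 1$ (discrete $S$) is immediate. For the inductive step, disjointify so that $S = D_1 \sqcup \cdots \sqcup D_K$, and suppose for contradiction that $x \in S\langle K\rangle$ with $x \in D_1$. Pick open $U$ with $U \cap D_1 = \{x\}$. The operation $\myIso$, and therefore $\myLpt$, commutes with intersection with an open set (immediate from the definition); hence $x \in (S \cap U)\langle K\rangle$. Writing $S \cap U = \{x\} \cup S'$ with $S' := \bigcup_{i \geq 2}(D_i \cap U)$ a union of $K-1$ discrete sets not containing $x$, the elementary identity
\[
\myLpt(\{x\} \cup T) = \myLpt(T) \cup (\{x\} \cap \mycl(T)) \qquad (x \notin T),
\]
together with the monotonicity $S'\langle 0\rangle \supseteq S'\langle 1\rangle \supseteq \cdots$, yields by induction on $j$ that $x \in (S \cap U)\langle j\rangle$ iff $x \in \mycl(S'\langle j-1\rangle)$ for every $j \geq 1$. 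Taking $j = K$, the inductive hypothesis applied to $S'$ gives $S'\langle K-1\rangle = \emptyset$, so $x \in \mycl(\emptyset) = \emptyset$, a contradiction.

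The main obstacle is the sharp bound in the auxiliary claim. A naive strategy---iterating Lemma \ref{lem:basic0} to conclude $A \supsetneq A\langle 1\rangle \supsetneq \cdots$ so long as the sets are nonempty---only yields finiteness of the rank, not the tight bound $\leq K$. What makes the induction close is the freedom to let $S$ be non-closed, which allows the inductive step to take place inside the open neighborhood $U$ near $x$, combined with the explicit formula above for $\myLpt$ of a union with a singleton.
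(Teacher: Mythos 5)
Your argument is correct and complete. Note, however, that the paper does not actually prove this lemma: its ``proof'' is a citation to \cite[1.3]{FM2}, so what you have written is a genuine self-contained replacement rather than a variant of an argument in the text. Both halves of your proof check out. The stratification $A=\bigsqcup_{j=0}^{k-1}\myIso(A\langle j\rangle)$ into discrete pieces is the standard Cantor--Bendixson decomposition and gives the upper bound on the number of discrete sets. The delicate half is the sharp bound $\myrank(A)\le K$ for any decomposition into $K$ discrete sets, and you correctly identify why the naive iteration of Lemma \ref{lem:basic0} is insufficient (it only gives strict descent, hence finiteness, not the bound $K$). Your fix --- extending the iteration $S\langle j\rangle$ to non-closed $S$, using that $\myLpt$ commutes with intersection against an open set so the whole computation localizes to $U$ with $U\cap D_1=\{x\}$, and then tracking the single point $x$ via the identity $\myLpt(\{x\}\cup T)=\myLpt(T)\cup(\{x\}\cap\mycl(T))$ together with the monotonicity of $j\mapsto\mycl(S'\langle j\rangle)$ --- is sound; the induction on $K$ closes because $S'=\bigcup_{i\ge 2}(D_i\cap U)$ is a union of $K-1$ discrete sets avoiding $x$. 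One small observation: the hypotheses that $A$ is countable and closed are not really used in your argument beyond making the paper's definition of $A\langle j\rangle$ applicable, which is consistent with the lemma being true in this greater generality; this does not affect correctness.
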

\begin{proof}
	See \cite[1.3]{FM2}.
\end{proof}

\begin{corollary}\label{cor:very_basic}
	Let $\mathcal R$ be an expansion of a dense linear order without endpoints.
	If a definable set is a union of finitely many discrete set, it is a union of finitely many definable discrete sets.
\end{corollary}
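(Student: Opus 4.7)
The plan is to iterate the isolated-points operator on $X$ and bound the number of iterations needed. Set $T_1 := \myIso(X)$ and inductively $T_{k+1} := \myIso(\myLpt^k(X))$. Each $T_k$ is discrete by construction, and each is definable because $\myIso(Z)$ is first-order definable whenever $Z$ is: a point $x \in Z$ is isolated iff there exist $a < x < b$ with $Z \cap (a,b) = \{x\}$, a condition available in any expansion of a DLO. Lemma \ref{lem:basic0} ensures $T_k$ is nonempty whenever $\myLpt^{k-1}(X)$ is nonempty, so the only remaining task is to show the iteration terminates, after which $X = T_1 \cup \cdots \cup T_n$ as required.

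For termination, I would fix a (non-definable) presentation $X = S_1 \cup \cdots \cup S_n$ as a union of $n$ pairwise disjoint discrete sets and argue $\myLpt^n(X) = \emptyset$ by induction on $n$, with $n \leq 1$ trivial. For the inductive step, for each $i$ consider $U_i := X \setminus \mycl(S_i)$, which is open in $X$ and contained in $\bigcup_{j \neq i} S_j$; the inductive hypothesis (together with Lemma \ref{lem:basic2}) yields $\myLpt^{n-1}(U_i) = \emptyset$, and a routine induction on $k$ (using that $\myIso(U_i) = \myIso(X) \cap U_i$ for open $U_i \subseteq X$) gives the subspace identity $\myLpt^k(X) \cap U_i = \myLpt^k(U_i)$. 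Combining these, $\myLpt^{n-1}(X) \subseteq X \cap \mycl(S_i)$ for every $i$, hence $\myLpt^{n-1}(X) \subseteq X \cap \bigcap_i \mycl(S_i)$.

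The final, critical observation is that $X \cap \bigcap_i \mycl(S_i)$ is itself discrete. Given $x$ in this intersection with $x \in S_{i_0}$, choose an open neighborhood $N$ of $x$ with $N \cap S_{i_0} = \{x\}$; any other point $y \neq x$ of $N \cap X \cap \bigcap_i \mycl(S_i)$ would have to lie in some $S_{j_0}$ with $j_0 \neq i_0$ (by disjointness of the $S_i$) and in $\mycl(S_{i_0}) \setminus S_{i_0}$, making $y$ a proper limit of $S_{i_0}$. But Hausdorffness of the order topology on a DLO then lets us shrink $N$ to an open neighborhood of $y$ inside $N \setminus \{x\}$, which must still meet $S_{i_0}$---contradicting $N \cap S_{i_0} = \{x\}$. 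Hence $\myLpt^{n-1}(X)$ lies in a discrete set, is itself discrete, and $\myLpt^n(X) = \emptyset$. I expect this last step---the discreteness of the intersection---to be the main obstacle: a purely set-theoretic analysis of the $(S_i)$ only produces the trivial bound that $\myLpt(X)$ is again a union of $n$ discrete sets, so the rank-reducing content must come from combining the discreteness of each individual $S_i$ with separation in the order topology, not from combinatorics on the decomposition.
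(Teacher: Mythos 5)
Your proof is correct, and its definable skeleton (iterate $\myIso$ so that $X=T_1\cup\cdots\cup T_n$ once the iteration dies, each $T_k$ being definable and discrete) is exactly what the paper does. Where you genuinely diverge is in how termination is established. The paper disposes of this in one line by invoking Lemma \ref{lem:very_basic}, i.e.\ the Cantor--Bendixson characterization quoted from Friedman--Miller, which is stated only for countable closed subsets of $\mathbb R$; taken literally, that citation does not cover the corollary's actual setting (an arbitrary, not necessarily closed or countable, definable subset of an arbitrary expansion of a dense linear order). You instead prove the bound $\myLpt^n(X)=\emptyset$ from scratch by induction on $n$: the reduction $\myLpt^{n-1}(X)\subseteq X\cap\bigcap_i\mycl(S_i)$ via the relatively open sets $U_i=X\setminus\mycl(S_i)$ and the subspace identity $\myLpt^k(X)\cap U_i=\myLpt^k(U_i)$ is sound, and your ``critical observation'' that $X\cap\bigcap_i\mycl(S_i)$ is discrete is correct --- in fact only $T_1$-separation is needed there, since one may take $N\setminus\{x\}$ as the shrunken neighborhood of $y$. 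Your closing diagnosis is also accurate: the rank-reducing content cannot come from combinatorics on the decomposition alone, since $\myLpt(X)$ is a priori still a union of $n$ discrete sets. What your route buys is a self-contained argument valid in the full generality in which the corollary is stated; what the paper's route buys is brevity, at the cost of leaning on an external fact proved in a narrower setting.
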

\begin{proof}
	The corollary follows from Lemma \ref{lem:very_basic} because $\myIso(X)$ is definable when $X$ is a definable set.
\end{proof}

We next prove two key lemmas.
\begin{lemma}\label{lem:basic1}
	Let $m$ be a positive integer.
	Let $E_i$ be a nonempty compact subset of $\mathbb R$ which is the union of at most $(N_i+1)$ discrete sets for $1 \leq i \leq m$.
	Here, $N_i$ are nonnegative integers for $1 \leq i \leq m$.
	Then there exists a positive integer $K$ satisfying the following condition:
	
	For any linear map $v:\mathbb R^n \to \mathbb R$ with real coefficients, the image $v(\prod_{i=1}^mE_i)$ is compact and the union of at most $K$ discrete sets.
\end{lemma}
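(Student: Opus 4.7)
The plan is to bound the Cantor--Bendixson rank of the image and invoke Lemma \ref{lem:very_basic}. Each $E_i$, being a finite union of discrete subsets of $\mathbb R$, is countable, and being compact it is closed; so Lemma \ref{lem:very_basic} gives $\myrank(E_i)\le N_i+1$. The product $\prod_{i=1}^m E_i$ is compact, so its image $v(\prod_{i=1}^m E_i)$ under the continuous map $v$ is compact (yielding the first conclusion of the lemma) and countable, and by Lemma \ref{lem:very_basic} again it suffices to produce a bound on $\myrank(v(\prod_{i=1}^m E_i))$ depending only on the $N_i$.

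Writing $v(x_1,\ldots,x_m)=\sum_{i=1}^m c_i x_i$ with $c_i\in\mathbb R$, one has the Minkowski-sum identity $v(\prod_{i=1}^m E_i)=c_1 E_1+\cdots+c_m E_m$. Scaling by a nonzero real is a homeomorphism of $\mathbb R$, and $c_i E_i=\{0\}$ when $c_i=0$, so $\myrank(c_i E_i)\le N_i+1$ in all cases. The problem therefore reduces to bounding the Cantor--Bendixson rank of a Minkowski sum of finitely many compact subsets of $\mathbb R$ of known finite rank.

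The key step, which I regard as the main obstacle, is the sub-claim that for compact sets $A,B\subseteq\mathbb R$,
\[
\myrank(A+B)\le\myrank(A)+\myrank(B)-1.
\]
I would deduce this from the derived-set inclusion $(A+B)'\subseteq(A'+B)\cup(A+B')$: if $a_n+b_n\to c$ with the values $a_n+b_n$ pairwise distinct, compactness of $A$ and $B$ yields, after passing to a subsequence, limits $a\in A$, $b\in B$ with $a+b=c$, and either $a_n\ne a$ infinitely often (so $a\in A'$) or $b_n\ne b$ infinitely often (so $b\in B'$). One then inducts on $\myrank(A)+\myrank(B)$, using that $\myrank(S)\le\myrank(S')+1$ for closed $S$, that derivation commutes with finite unions so the rank of a finite union of closed sets is the maximum of the ranks, and the base case that compact sets of rank $1$ are finite (for which $A+B$ is just a finite union of translates of $B$). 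Iterating the sub-claim $m-1$ times yields $\myrank(v(\prod_{i=1}^m E_i))\le\sum_{i=1}^m(N_i+1)-(m-1)=1+\sum_{i=1}^m N_i$, so one may take $K:=1+\sum_{i=1}^m N_i$. Compactness of each $E_i$ is used essentially both to extract convergent subsequences in the sub-claim and to ensure that the Minkowski sums under consideration remain closed.
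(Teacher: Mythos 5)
Your proof is correct, and it reaches the same destination by a recognizably different route. The paper works directly with the $m$-fold image $X=v(\prod_i E_i)$: it proves (by the same compactness/subsequence extraction you use) that $v(e_1,\ldots,e_m)$ is an accumulation point of $X$ exactly when some $e_k\in\myLpt(E_k)$, and then inducts on $\sum_i N_i$, decomposing $X$ into the sets $X_i'$ (where the $i$-th coordinate is a limit point of $E_i$) together with one discrete leftover piece; the count of discrete sets is carried along explicitly and the resulting bound $K$ is defined by a recursion. You instead translate everything into Cantor--Bendixson rank via Lemma \ref{lem:very_basic}, reduce to binary Minkowski sums, and isolate the clean subadditivity $\myrank(A+B)\le\myrank(A)+\myrank(B)-1$ from the derived-set inclusion $(A+B)'\subseteq(A'+B)\cup(A+B')$, iterating $m-1$ times. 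The underlying idea --- accumulation points of the sum force an accumulation point in some factor, extracted by compactness --- is identical, but your packaging is tidier and yields the sharper explicit bound $K=1+\sum_i N_i$ (the paper's recursive $K$ is considerably larger, though the lemma only needs existence). The small points you should make explicit if writing this up: a discrete subset of $\mathbb R$ is countable (so Lemma \ref{lem:very_basic} applies to each $E_i$ and to the image), monotonicity of rank for closed subsets is what lets you pass from the inclusion of derived sets to the rank inequality, and the degenerate cases $c_i=0$ and $A'=\emptyset$ are absorbed into the base case as you indicate.
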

\begin{proof}
	We fix a linear map $v:\mathbb R^n \to \mathbb R$.
	Set $X= v(\prod_{i=1}^mE_i)$ for simplicity of notation.
	The set $X$ is compact because it is the image of the compact set $\prod_{i=1}^mE_i$ under a continuous map.
	We prove that $X$ is the union of at most $K$ discrete sets, where $K$ is a positive integer determined only by $m$, $N_1,\ldots, N_m$.
	We first show the following claim:
	\medskip
	
	\textbf{Claim 1.} Let $(x_n)_{n \in \mathbb N}$ be a Cauchy sequence of elements in $X$.
	Let $r=\lim_{n \to \infty}x_n$.
	Then, at least one of the following holds:
	\begin{enumerate}
		\item[(i)]  There are infinitely many $n \in \mathbb N$ such that $x_n=r$.
		\item[(ii)] There exists an infinite subsequence $(y_n)_{n \in \mathbb N}$ of $(x_n)_{n \in \mathbb N}$ such that $y_n \neq r$ for each $n \in \mathbb N$ and the limit $r$ is of the form $r=v(f_1,\ldots, f_m)$ with $f_i \in E_i$ and $f_k \in \myLpt(E_k)$ for some $1 \leq k \leq m$.
	\end{enumerate}
	\begin{proof}[Proof of Claim 1]
		We assume that the sequence $(x_n)_{n \in \mathbb N}$ does not possess property (i).
		We demonstrate that it has property (ii) under this assumption. 
		Since $(x_n)_{n \in \mathbb N}$ does not possess property (i), we may assume that $x_n \neq r$ for each $n \in \mathbb N$ by taking a subsequence if necessary.
		For each $n \in \mathbb N$ and $1 \leq i \leq m$, there exist $e_{ni} \in E_i$ such that $x_n=\sum_{i=1}^mb_ie_{ni}$ because $x_n$ is an element in $X$.
		For each $1 \leq i \leq m$, we set $D_i:=\{e_{ni}\;|\; n \in\mathbb N\}$.
		When $D_1$ is a finite set, there exists $r_1 \in E_1$ such that $\{n \in \mathbb N\;|\; e_{1n}=r_1\}$ is an infinite set.
		We can take a subsequence of $(x_n)$ so that $e_{in}=r_1$ for each $n \in \mathbb N$.
		When $D_1$ is an infinite set, we can take a subsequence $(n_j)_{j \in \mathbb N}$ so that $(e_{1n_j})_{j \in \mathbb N}$ is a Cauchy sequence because $E_1$ is a compact set.
		Let $f_1$ be the limit of the Cauchy sequence $(e_{1n_j})_{j \in \mathbb N}$.
		If there are infinitely many $j \in \mathbb N$ such that $e_{1n_j}=f_1$, by taking a subsequence, we may assume that $e_{1n_j}=f_1$ for all $j \in \mathbb N$.
		Consider the other case.
		By removing finitely many numbers in the sequence $\{n_j\}_{j \in \mathbb N}$, we may assume that $e_{1n_j} \neq f_1$ for all $j \in \mathbb N$.
		In this case, $f_1$ is an accumulation point of $E_1$.
		In summary, we have taken a subsequence $(f_{ni})_{n \in \mathbb N}$ of $(e_{ni})_{n \in \mathbb N}$ which satisfies only one of the following conditions for $i=1$:
		\begin{enumerate}
			\item[$(1)_i$] $f_{ni}=f_i$ for all $n \in \mathbb N$, where $f_i \in E_i$;
			\item[$(2)_i$] $\{f_{ni}\}_{n \in \mathbb N}$ is a Cauchy sequence converging to a point $f_i \in \myLpt(E_i)$ such that $f_{ni} \neq f_i$ for each $n \in \mathbb N$.
		\end{enumerate}
		Applying the same argument for $i>1$, we get a subsequence $(f_{ni})_{n \in \mathbb N}$ of $(e_{ni})_{n \in \mathbb N}$ enjoying only one of properties $(1)_i$ and $(2)_i$ for each $1 \leq i \leq m$.
		Set $y_n=v(f_{n1},\ldots, f_{nm})$ for each positive integer $n$.
		If the subsequence $(f_{ni})_{n \in \mathbb N}$ holds property $(1)_i$ for every $1 \leq i \leq m$, $y_n$ is a constant value independent of $n$.
		It means that $(x_n)_{n \in \mathbb N}$ enjoys property (i).
		It is a contradiction to the assumption.
		There exists $1 \leq k \leq m$ such that $(f_{nk})_{n \in \mathbb N}$ holds property $(2)_k$.
		We have $\lim_{n \to \infty} y_n=r$ because $(y_n)_{n \in \mathbb N}$ is a subsequence of $(x_n)_{n \in \mathbb N}$.
		We have $\lim_{n \to \infty} y_n=v(f_1\ldots, f_m)$ because $v$ is continuous.
		We have shown that, for all $1 \leq i \leq m$, there exist $f_i \in E_i$ such that $r=v(f_1,\ldots, f_m)$ and $f_k \in \myLpt(E_k)$ for some $1 \leq k \leq m$.
		%
	\end{proof}
	
	We next prove the following claim:
	\medskip
	
	\textbf{Claim 2.}
	Let $x=v(e_1,\ldots, e_m) \in X$, where $e_i \in E_i$ for each $1 \leq i \leq m$.
	The point $x$ is an accumulation point in $X$ if and only if $e_k \in \myLpt(E_k)$ for some $1 \leq k \leq m$.
	
	\begin{proof}[Proof of Claim 2]
		The `only if' part immediately follows from Claim 1.
		We prove the opposite implication.
		Assume that $e_k \in \myLpt(E_k)$ for some $1 \leq k \leq m$.
		Let $\{a_n\}_{n \in \mathbb N}$ be a sequence of elements in $E_k$ converging to $e_k$ satisfying the relation $a_n \neq e_k$ for each positive integer $n$.
		It is obvious that $\{x_n:=\sum_{1 \leq i \leq m, i \neq k}b_ie_i + b_ka_n\}$ is a Cauchy sequence of elements in $X \setminus \{x\}$ converging to $x$ when $v(x_1,\ldots, x_m)$ is given by $\sum_{i=1}^m b_ix_i$.
		It implies that $x$ is an accumulation point.
	\end{proof}
	
	We have finished the preparation.
	We prove the lemma by induction on $N:=\sum_{i=1}^mN_i$.
	When $N=0$, we have $N_i=0$ for all $1 \leq i \leq m$.
	It means that $E_i$ are compact and discrete for $1 \leq i \leq m$.
	Therefore, $E_i$ are finite sets for $1 \leq i \leq m$.
	In this case, the set $X$ is a finite set.
	In particular, it is closed and discrete.
	
	We next consider the case in which $N>0$.
	Set $X'_i=\{v(e_1,\ldots, e_m) \in X\;|\; e_j \in E \ (1 \leq j \leq m) \text{ and } e_i \in \myLpt(E_i)\}$ for each $1 \leq i \leq m$.
	Note that $\myLpt(E_i)$ is closed and it is the union of at most $N_i$ discrete sets by  Lemma \ref{lem:very_basic}.
	The set $X'_i$ is the union of at most $K'_i$ discrete sets by the induction hypothesis, where $K'_i$ is a natural number determined only by $m$, $N_1,\ldots N_m$.
	Set $Y=\{v(e_1,\ldots, e_m) \in X\;|\; e_j \in E \setminus \myLpt(E_j) \ (1 \leq j \leq m) \}$.
	It is discrete by Claim 2.
	Set $K=1+\sum_{i=1}^m K'_i$.
	Since $X=\bigcup_{i=1}^m X'_i \cup Y$, the set $X$ is the union of at most $K$  discrete sets.
	We have proven the lemma.
\end{proof}

We extend Lemma \ref{lem:basic1} as follows:
\begin{lemma}\label{lem:basic11}
	Let $\mathcal R$ be the o-minimal structure and $\mathbb K$ be  the field given in either (1) or (2) below.
	\begin{enumerate}
		\item[(1)] $\mathcal R=(\mathbb R, <, +, 0)$ and $\mathbb K=\mathbb Q$.
		\item[(2)] Let $\mathcal R$ be the $\mathbb R$-vector space structure over $\mathbb R$ and $\mathbb K=\mathbb R$.
	\end{enumerate}
	Let $D$ be a countable subset of $\mathbb R$ such that $(\mathcal R,D)$ is locally o-minimal.
	Let $m$ be a positive integer.
	Let $E$ be a nonempty compact subset of $\mathbb R$ which is a union of finitely many discrete sets.
	Let $v:\mathbb R^m \to \mathbb R$ be a linear map with coefficients in $\mathbb K$.
	Then the image $v((D\cup E)^m)$ is closed and a union of finitely many discrete sets.
\end{lemma}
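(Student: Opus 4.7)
The plan is to decompose $(D\cup E)^m$ according to which coordinates are taken in $D$ and which in $E$, and thereby reduce to controlling a Minkowski sum $A+B$ where $A$ comes from $D$ (handled by local o-minimality) and $B$ comes from $E$ (handled by Lemma \ref{lem:basic1}). Concretely, for each $S\subseteq\{1,\ldots,m\}$ let $v_S$ and $v_{S^c}$ denote the restrictions of $v$ to the $S$- and $S^c$-coordinates. Then
\[
v\bigl((D\cup E)^m\bigr) \;=\; \bigcup_{S\subseteq\{1,\ldots,m\}} \bigl(A_S + B_S\bigr), \qquad A_S := v_S(D^{|S|}),\ B_S := v_{S^c}(E^{|S^c|}).
\]
Since the union is finite, it suffices to show that each $A_S + B_S$ is closed and a union of finitely many discrete subsets of $\mathbb R$.

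For $B_S$, Lemma \ref{lem:basic1} (applied with all factors equal to $E$) gives that $B_S$ is compact and a union of finitely many discrete sets; in particular it is a closed countable set of finite Cantor--Bendixson rank. For $A_S$, the key observation is that $v_S$ has coefficients in $\mathbb K$, so $v_S$ is definable in $\mathcal R$ and $A_S$ is a countable set definable in $(\mathcal R,D)$. Local o-minimality of $(\mathcal R,D)$ then forces $A_S$ to meet a suitable neighborhood of every point of $\mathbb R$ in a finite union of points and open intervals; countability excludes the intervals, so $A_S$ is closed and discrete.

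It remains to analyze a Minkowski sum $A+B$ with $A\subseteq\mathbb R$ closed discrete and $B\subseteq\mathbb R$ compact of finite Cantor--Bendixson rank $r$. Closedness is routine: if $a_n + b_n \to c$, a subsequence has $b_n \to b\in B$, so $a_n \to c-b \in A$ and $c \in A+B$. For the rank, I plan to show by induction on $k$ that $(A+B)\langle k\rangle \subseteq A + B\langle k\rangle$. The inductive step uses compactness of the closed set $B\langle k\rangle$ together with discreteness of $A$: if $x$ is a limit point of $A + B\langle k\rangle$ witnessed by points $a_n + b_n \to x$ in $(A + B\langle k\rangle)\setminus\{x\}$, a subsequence gives $b_n \to b' \in B\langle k\rangle$, hence $a_n \to x - b' \in A$; discreteness of $A$ makes $a_n$ eventually equal to $x - b'$, so $b_n \to b'$ with $b_n \neq b'$ eventually, placing $b' \in B\langle k+1\rangle$ and $x \in A + B\langle k+1\rangle$. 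Taking $k=r$ yields $(A+B)\langle r\rangle = \emptyset$, and Lemma \ref{lem:very_basic} then expresses $A+B$ as a union of at most $r$ discrete sets. The main obstacle is this Cantor--Bendixson induction, which is exactly where compactness of $E$ is used essentially: at each step it lets me extract a convergent subsequence inside $B\langle k\rangle$, and combined with the genuine discreteness of $A$ (what local o-minimality buys from countability of the $D$-image) it confines the accumulation behaviour of $A+B$ to that of $B$.
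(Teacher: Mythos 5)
Your proof is correct, and while it opens with the same $2^m$ decomposition by coordinate types and the same two ingredients as the paper (Lemma \ref{lem:basic1} for the $E$-part; local o-minimality plus countability forcing the $D$-part to be closed and discrete, which is \cite[Lemma 2.3]{Fuji}), it finishes the key step --- the Minkowski sum of a closed discrete set $A$ with a compact finite-rank set $B$ --- by a genuinely different route. The paper needs more than discreteness of $Z_\sigma$: it extracts a \emph{uniform} gap $d=\inf\{|x-y| : x,y\in Z_\sigma,\ x\neq y\}>0$ (which costs an extra definability argument via \cite[Proposition 2.8]{FKK}), enumerates $Z_\sigma$ in order, and splits it into $M$ residue classes with $Md$ exceeding the diameter bound of $Y_\sigma$, so that within each class the translates of $Y_\sigma$ are order-separated; this exhibits the discrete pieces explicitly and gives closedness at the same time. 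Your Cantor--Bendixson induction $(A+B)\langle k\rangle\subseteq A+\myLpt^k(B)$ replaces all of that with pure point-set topology, using only that $A$ is closed and discrete and $B$ is compact, and then converts $\myrank(A+B)\leq\myrank(B)$ back into a count of discrete pieces via Lemma \ref{lem:very_basic}; the induction step as you state it is sound (compactness of $B\langle k\rangle$ gives a convergent subsequence $b_n\to b'$, closedness and discreteness of $A$ force $a_n$ eventually constant, so the accumulation is pushed onto $B\langle k\rangle$). The one hypothesis of Lemma \ref{lem:very_basic} you should make explicit is countability of $A+B$: this holds because $D$ is countable and $E$, being a finite union of discrete subsets of the separable space $\mathbb R$, is countable, but it deserves a sentence. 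Your approach buys a cleaner argument that avoids the uniform-gap machinery entirely; the paper's buys an explicit decomposition into discrete sets with a bound $K$ depending only on the ranks involved, a uniformity that Lemma \ref{lem:basic11} itself does not require.
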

\begin{proof}
	Set $X=v((D\cup E)^m)$.
	Let $v(x_1,\ldots, x_m)=\sum_{i=1}^m b_ix_i$.
	We may also assume that all $b_i$ are nonzero.
	For any $\sigma=(i_1,\ldots,i_m) \in \{0,1\}^m$, we set $$X_{\sigma}:=\left\{\sum_{j=1}^m b_je_j\;\middle|\; e_j \in E \text{ if }i_j=0 \text{ and } e_j \in D \text{ otherwise for each }1 \leq j \leq m\right\}.$$
	We obviously have $X=\bigcup_{\sigma \in \{0,1\}^m} X_{\sigma}$.
	Therefore, we have only to prove that $X_{\sigma}$ is closed and the union of finitely many discrete sets for each $\sigma \in \{0,1\}^m$.
	We fix an arbitrary $\sigma=(i_1,\ldots,i_m) \in \{0,1\}^m$.
	We may assume that $i_1=\cdots = i_k=0$ and $i_{k+1}=\cdots=i_m=1$ by permuting the coordinates if necessary.
	When $k=m$, the claim on $X_{\sigma}$ follows from Lemma \ref{lem:basic1}.
	We assume that $k<m$ for the rest of the proof.
	
	We set $Y_{\sigma}:=\{\sum_{j=1}^k b_je_j\;|\; e_j \in E\}$ and $Z_{\sigma}:=\{\sum_{j=k+1}^m b_je_j\;|\; e_j \in D\}$.
	We obviously have $X_{\sigma}=\{y+z\;|\; y \in Y_{\sigma}\text{ and }z \in Z_{\sigma}\}$.  
	The set $Y_{\sigma}$ is compact and it is a finite union of discrete sets by Lemma \ref{lem:basic1}.
	When $Z_{\sigma}$ is a finite set, $X_{\sigma}$ is closed and a union of finitely many discrete sets because $X_{\sigma}=\bigcup_{z \in Z_{\sigma}} z + Y_{\sigma}$ and $z+Y_{\sigma}:=\{z+y\;|\; y \in Y_{\sigma}\}$ is closed and it is a union of finitely many discrete sets for each $z \in Z_{\sigma}$.
	We consider the case in which $Z_{\sigma}$ is infinite in the rest of the proof. 
	Since $Z_{\sigma}$ is countable and definable in $(\mathcal R,D)$, the set  $Z_{\sigma}$ is discrete and closed by \cite[Lemma 2.3]{Fuji}.
	We have the following three separate cases:
	\begin{enumerate}
		\item[(a)] $\inf Z_{\sigma} > - \infty$ and $\sup Z_{\sigma} = \infty$;
		\item[(b)] $\inf Z_{\sigma} = - \infty$ and $\sup Z_{\sigma} < \infty$;
		\item[(c)] $\inf Z_{\sigma} = - \infty$ and $\sup Z_{\sigma} = \infty$.
	\end{enumerate}
	We only consider case (c).
	We can prove the lemma similarly in the other cases.
	
	We define a bijection $\iota:\mathbb Z \to Z_{\sigma}$ so that $\iota(n_1)<\iota(n_2)$ if and only if $n_1<n_2$.
	Fix an arbitrary point $x_0$ and put $\iota(0)=x_0$.
	When $n>0$ and $\iota(n)$ is determined, we define $\iota(n+1)=\inf\{x \in Z_{\sigma}\;|\;x>\iota(n)\}$.
	We have $\iota(n+1) \in Z_{\sigma}$ because $Z_{\sigma}$ is discrete and closed.
	We define $\iota(n)$ similarly when $n<0$.
	The injectivity of $\iota$ is obvious.
	Let $x$ be an arbitrary point in $Z_{\sigma}$.
	Since $Z_{\sigma}$ is closed and discrete, there are finitely many points in $Z_{\sigma}$ between $x_0$ and $x$.
	This fact implies the surjectivity of $\iota$.
	In cases (a) and (b), we construct order-preserving bijections $\iota:\mathbb N \to Z_{\sigma}$ and $\iota:(-\mathbb N) \to Z_{\sigma}$, respectively.
	
	Set $d=\inf\{|x-y|\;|\; x,y \in Z_{\sigma}, x \neq y\}$.
	The definable set $\{|x-y|\;|\; x,y \in Z_{\sigma}, x \neq y\}$ is discrete and closed by \cite[Proposition 2.8(1),(6)]{FKK}.
	It implies that $d>0$.
	Since $Y_{\sigma}$ is compact, by shifting $X_{\sigma}$ if necessary, there exists a positive integer $N$ such that $Y_{\sigma}$ is contained in the closed interval $[0,N]$.
	Since the ordered group of reals is archimedean, there exists a positive integer $M$ such that $Md>N$.
	We set $W_i=\{y + \iota(Mk+i)\;|\; y \in Y_{\sigma},\ k \in \mathbb Z\}$ for $1 \leq i \leq M$.
	We have $X_{\sigma}=\bigcup_{i=1}^M W_i$.
	Therefore, we have only to prove that $W_i$ is closed and a union of finitely many discrete sets.
	We fix $1 \leq i \leq M$.
	Let $k_1<k_2$ be arbitrary integers.
	By the definition of $d$, we have $\iota(Mk_2+i)-\iota(Mk_1+i) \geq M(k_2-k_1)d \geq Md$.
	Since $Y_{\sigma} \subseteq [0,N]$ and $Md>N$, we have
	\begin{equation}
		\iota(Mk_1+i)+Y_{\sigma} < \iota(Mk_2+i)+Y_{\sigma}. \label{eq:xxx}
	\end{equation}
	This inequality means that an arbitrary element in $\iota(Mk_1+i)+Y_{\sigma}$ is smaller than an arbitrary element in $\iota(Mk_2+i)+Y_{\sigma}$.
	The following equality is obvious:
	\begin{equation}
		W_i=\bigcup_{k \in \mathbb Z}(\iota(Mk+i)+Y_{\sigma}) \label{eq:xxx2}
	\end{equation}
	By Lemma \ref{lem:basic1}, $Y_{\sigma}$ is a union of finitely many discrete sets, say, $C_1,\ldots, C_K$.
	Relations (\ref{eq:xxx})  and (\ref{eq:xxx2}) imply that $W_i$ is closed because $Y_{\sigma}$ is closed.
	Since $C_j$ is contained in $Y_{\sigma}$ and $C_j$ is discrete, inequality (\ref{eq:xxx}) imply that the set $C'_j:=\{y + \iota(Mk+i)\;|\; y \in C_j,\ k \in \mathbb Z\}$ is discrete for each $1 \leq j \leq K$.
	The set  $W_i$ is the union of discrete sets $C'_1, \ldots C'_K$.
	We have finished the proof.
\end{proof}

We are now ready to complete the proof of Theorem \ref{thm:semi-d-minimal}.
\begin{proof}[Proof of Theorem \ref{thm:semi-d-minimal}]
	By Proposition \ref{prop:dmin_suff}, we have only to prove that, for any nonnegative integer $n$ and any linear map $v:\mathbb R^n \to \mathbb R$ with coefficients in $\mathbb K$, the image $v((D \cup E)^n)$ is closed and a finite union of discrete subsets of $\mathbb R$.
	It immediately follows from Lemma \ref{lem:basic11}.
\end{proof}

We give corollaries of Theorem \ref{thm:semi-d-minimal}.
The first one is Theorem \ref{thm:Caushy_version}.
\begin{proof}[Proof of Theorem \ref{thm:Caushy_version}]
	Set $c=\lim_{n \to \infty}a_n$.
	The set $E \cup \{c\}$ is compact, and it is the union of two discrete sets $E \setminus \{c\}$ and $\{c\}$.
	Apply Theorem \ref{thm:semi-d-minimal} to $\mathcal V=(\mathbb R, <,+,0,\Lambda)$ and $E \cup \{c\}$.
	The structure $(\mathcal V,E \cup \{c\})^{\#}$ is d-minimal.
	It is obvious that $(\mathcal V,E)$ is interdefinable with $(\mathcal V,E \cup \{c\})$ which is a reduct of $(\mathcal V,E \cup \{c\})^{\#}$.
\end{proof}

\begin{corollary}\label{cor:semi-d-minimal}
	Let $E$ be a compact subset of $\mathbb R$ which is the union of finitely many discrete sets. 
	Then $(\mathbb R,<,+,0,E,\mathbb Z)$ is d-minimal. 
\end{corollary}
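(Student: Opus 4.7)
The plan is to reduce immediately to Theorem~\ref{thm:semi-d-minimal} by taking $\mathcal R=(\mathbb R,<,+,0)$ and $D=\mathbb Z$. The hypotheses on $E$ in the corollary are identical to those in the theorem, so the only additional ingredient required is the local o-minimality of $(\mathcal R,\mathbb Z)$.

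First I would verify that $(\mathbb R,<,+,0,\mathbb Z)$ is locally o-minimal. This is a folklore fact following from Presburger-style quantifier elimination: every unary definable set in this structure is a finite Boolean combination of half-lines and cosets of $\tfrac{1}{n}\mathbb Z$, and any such set intersects a bounded open interval in only finitely many points and subintervals. I would cite the appropriate reference from the literature on locally o-minimal expansions of the ordered group of reals.

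Having established local o-minimality, Theorem~\ref{thm:semi-d-minimal} immediately gives that $(\mathcal R,\mathbb Z\cup E)^{\#}$ is d-minimal. Since $\mathbb Z$ and $E$ are both subsets of $\mathbb Z\cup E$, the very definition of the $^{\#}$-closure puts both of them among the predicates of $(\mathcal R,\mathbb Z\cup E)^{\#}$. Hence $(\mathbb R,<,+,0,E,\mathbb Z)$ is a reduct of the d-minimal structure $(\mathcal R,\mathbb Z\cup E)^{\#}$. D-minimality is plainly inherited by reducts, because each formulation of the definition is a universal statement over the collection of definable sets, and this collection only shrinks when we pass to a reduct.

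The only non-automatic step is the first one, and even it is essentially a citation; I expect the entire argument to occupy no more than a handful of lines once a reference for the local o-minimality of $(\mathbb R,<,+,0,\mathbb Z)$ is inserted.
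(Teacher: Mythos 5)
Your proof is correct and follows exactly the paper's own route: the paper likewise observes that $(\mathbb R,<,+,0,\mathbb Z)$ is locally o-minimal, applies Theorem~\ref{thm:semi-d-minimal} with $D=\mathbb Z$, and concludes by noting that the given structure is a reduct of $(\mathbb R,<,+,0,E\cup\mathbb Z)^{\#}$. The only cosmetic difference is that you spell out why d-minimality passes to reducts (cleanest via the second, fiberwise formulation of the definition), which the paper leaves implicit.
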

\begin{proof}
	It follows from Theorem \ref{thm:semi-d-minimal} because $(\mathbb R,<,+,0,E)$  is a reduct of $(\mathbb R,<,+,0,E \cup \mathbb Z)^{\#}$ and $(\mathbb R, <, +,0, \mathbb Z)$ is locally o-minimal.
\end{proof}

The following remark points out that the o-minimal structure $\mathcal R$ in Theorem \ref{thm:semi-d-minimal} should be $(\mathbb R,<,+,0)$ when $D=\mathbb Z$.
\begin{remark}
	The structure $(\mathbb R, <,+,0,\Lambda, \mathbb Z)$ defines multiplication on $\mathbb R$ by \cite[Theorem B]{HT}.  
	Therefore, $(\mathbb R,<,+,0,\Lambda, E,\mathbb Z)$ is not d-minimal.
\end{remark}

We next begin to prove Theorem \ref{thm:non_field_type}.
We first prove two lemmas.
\begin{lemma}\label{lem:func_char}
	Let $\mathcal R$ be either $(\mathbb R,<,+,0)$ or $(\mathbb R, <,+,0,\Lambda)$, where $\Lambda$ is the set of all real scalar functions on $\mathbb R$.
	Let $D$ be a countable subset of $\mathbb R$ such that $(\mathcal R,D)$ is locally o-minimal.
	Let $E$ be a compact subset of $\mathbb R$ which is a union of finitely many discrete sets. 
	Let $X$ be a subset of $\mathbb R^n$ and $f:X \to \mathbb R$ be a function definable in $(\mathcal R, D \cup E)^{\#}$. 
	
	Then there are finitely many pairs $(g_i:\mathbb R^n \to \mathbb R,D_i)_{i=1}^k$ of linear maps $g_i$ and subsets $D_i$ of $\mathbb R$ such that, 
	\begin{enumerate}
		\item[(a)] for each $x \in X$, the equality $f(x)=g_i(x)+d$ holds for some $1 \leq i \leq k$ and $d \in D_i$ and
		\item[(b)] the sets $D_i$ are definable in $(\mathcal R, D \cup E)^{\#}$ and have empty interiors for all $1 \leq i \leq k$.
	\end{enumerate}
\end{lemma}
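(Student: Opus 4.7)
The plan is to analyze the graph $\Gamma_f \subseteq \mathbb R^{n+1}$ of $f$ through the Friedman--Miller machinery prepared in Section \ref{sec:FM_results}. First observe that $D \cup E$ is countable: $D$ is countable by hypothesis, and $E$ is a finite union of discrete subsets of $\mathbb R$, each of which is countable. Since every function definable in $\mathcal R$ is piecewise affine, every image of $(D \cup E)^m$ under an $\mathcal R$-definable function is countable and hence nowhere dense. So $D \cup E$ is sparse and meets the hypothesis of Proposition \ref{prop:FM-main}, which writes $\Gamma_f$ as a finite union $A^{(1)} \cup \cdots \cup A^{(J)}$ with each $A^{(j)} \in \mathcal S_{n+1}$. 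Every fiber of $\Gamma_f$ over a point $x \in \mathbb R^n$ is either $\{f(x)\}$ or empty, hence has empty interior, so the same holds for each $A^{(j)}$. Lemma \ref{lem:FM-claim} then yields, for each $j$, an $\mathcal R$-definable $h_j : \mathbb R^{m_j + n} \to \mathbb R$ such that $(A^{(j)})_x \subseteq \mycl\bigl(h_j((D \cup E)^{m_j} \times \{x\})\bigr)$ for all $x$, and in particular $f(x)$ lies in this closure for some $j$ whenever $x \in X$.

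The next step is to linearize each $h_j$. By the semilinear cell decomposition invoked in the proof of Proposition \ref{prop:dmin_suff}, there is a finite partition $\mathbb R^{m_j+n} = \bigsqcup_i A_{j,i}$ and, on each piece, an affine map
$$ h_{j,i}(y, x) = L_{j,i}(y) + M_{j,i}(x) + c_{j,i}, $$
where $L_{j,i}$ and $M_{j,i}$ are linear maps with coefficients in $\mathbb K$ and $c_{j,i} \in \mathbb R$, with $h_j|_{A_{j,i}} = h_{j,i}|_{A_{j,i}}$. Consequently,
$$ h_j((D \cup E)^{m_j} \times \{x\}) \subseteq \bigcup_i \bigl( L_{j,i}((D \cup E)^{m_j}) + c_{j,i} + M_{j,i}(x) \bigr). $$
By Lemma \ref{lem:basic11}, each $L_{j,i}((D \cup E)^{m_j})$ is closed in $\mathbb R$, so the finite union on the right is already closed and contains the closure on the left. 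Relabelling the pairs $(j,i)$ as $\ell = 1, \ldots, k$, set $g_\ell := M_{j,i}$ and $D_\ell := L_{j,i}((D \cup E)^{m_j}) + c_{j,i}$. Then for every $x \in X$, the relation $f(x) = g_\ell(x) + d$ with $d \in D_\ell$ holds for some $\ell$, establishing (a).

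For (b), each $D_\ell$ is a translate of the linear image of the countable set $(D \cup E)^{m_j}$, hence is countable and therefore has empty interior; its definability in $(\mathcal R, D \cup E)^{\#}$ follows because $(D \cup E)^{m_j}$ is definable there by the $^{\#}$-construction, while $L_{j,i}$ and translation by $c_{j,i}$ are definable in $\mathcal R$. I expect the only genuinely delicate point to be the step above in which one replaces the non-constructive closure $\mycl(h_j((D \cup E)^{m_j} \times \{x\}))$ by a concrete set of the form $D_\ell + g_\ell(x)$: this replacement succeeds exactly because Lemma \ref{lem:basic11} guarantees that the linear images of $(D \cup E)^{m_j}$ are already closed. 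Without that closedness one would be stuck with an abstract closure on the right-hand side and could not extract pairs $(g_\ell, D_\ell)$ of the required form.
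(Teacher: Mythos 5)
Your proof is essentially correct, but it reaches the conclusion by a genuinely different route from the paper. The paper does \emph{not} invoke Lemma \ref{lem:FM-claim} here; instead it goes back to the weak-cell representation $A=\bigcup_{\alpha}\bigcap_{u\in P_{\alpha}}C_u$ of each piece of the graph, rules out forms (i), (iii) and (iv) of Definition \ref{def:weak_cell} because the fibers $A_x$ are singletons, and then handles form (ii) directly and form (v) by a supremum argument: the unique point $t_x$ of the fiber must equal $g_i(x)+s_i$ with $s_i=\sup\{h_i(u)\mid u\in P_{\alpha,i}\}$, and $s_i$ lies in $\mycl(h_i(F^m))=D_i$ by closedness. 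You instead treat Lemma \ref{lem:FM-claim} as a black box giving $f(x)\in\mycl\bigl(h_j((D\cup E)^{m_j}\times\{x\})\bigr)$, and then use the piecewise-affine decomposition of $h_j$ together with the closedness of the linear images $L_{j,i}((D\cup E)^{m_j})$ (Lemma \ref{lem:basic11}) to replace that abstract closure by the concrete finite union $\bigcup_i\bigl(D_\ell+g_\ell(x)\bigr)$. This is shorter because Lemma \ref{lem:FM-claim} already encapsulates the case analysis the paper redoes by hand; what the paper's version buys is independence from that lemma and a slightly more explicit description of where the constants $d$ come from (suprema over the index sets $P_{\alpha,i}$). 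Your closing observation correctly identifies the crux: without the closedness supplied by Lemma \ref{lem:basic11}, the closure could not be traded for a set of the form $D_\ell+g_\ell(x)$.

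One small repair is needed at the start: the inference ``countable, hence nowhere dense'' is not valid ($\mathbb Q$ is countable and dense), and the hypothesis of Proposition \ref{prop:FM-main} concerns the \emph{closure} of $f((D\cup E)^m)$. Countability does give sparseness (empty interior of the image itself), but for the nowhere-density of the closure you should argue as in Proposition \ref{prop:dmin_suff}: the image is contained in a finite union of translates of sets $v((D\cup E)^m)$, which by Lemma \ref{lem:basic11} are closed and finite unions of discrete sets, hence closed with empty interior; the closure of the image is therefore nowhere dense. Since you already rely on Lemma \ref{lem:basic11} for the main step, this is a one-line fix rather than a structural problem.
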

\begin{proof}
	We set $F=D \cup E$.
	Let $A$ be the graph of the definable function $f$.
	By Proposition \ref{prop:FM-main}, $A$ is the union of finitely many elements in $\mathcal S_{n+1}$, say $S_1,\ldots, S_l$.
	Let $\pi:\mathbb R^{n+1} \to \mathbb R^n$ be the projection forgetting the last coordinate.
	It is obvious that $\{\pi(S_i)\}_{1 \leq i \leq l}$ is a covering of $X$ and $S_i$ is the graph of the restriction $f_i$ of $f$ to $\pi(S_i)$.
	If we can take finitely many pairs of linear maps and subsets of $\mathbb R$ satisfying conditions (a) and (b) for $f_i$ for every $1 \leq i \leq k$, the union of the families of such pairs satisfies conditions (a) and (b) for $f$.
	Therefore, we may assume that $A$ is an element of $\mathcal S_{n+1}$. 
	Remember that $X$ and $f$ may not definable in $(\mathcal R, F)^{\#}$ under this assumption. 
	
	Since $A \in \mathcal S_{n+1}$, there exist a natural number $m$, a weak cell $C \subseteq \mathbb R^{m+n+1}$ and indexed family $(P_{\alpha})_{\alpha \in I}$ of subsets of $E^m$ such that $$A=\bigcup_{\alpha \in I} \bigcap_{u \in P_{\alpha}} C_u.$$
	The weak cell $C$ is of one of the forms (i) through (v) in Definition \ref{def:weak_cell}.
	Since $A$ is the graph of a function, the fiber $A_x=\{y \in \mathbb R\;|\; (x,y) \in A\}$ is a singleton for each $x \in \pi(A)$.
	We have $$A_x = \bigcap_{u \in P_{\alpha}}C_{(u,x)}$$ for some $\alpha \in I$.
	We can easily prove that $A_x$ is an infinite set when $C$ is either of the form (i), (iii) or (iv) in Definition \ref{def:weak_cell}.
	We only consider the case (iii) here.
	There exists a function $p:\mathbb R^m \times \mathbb R^n \to \mathbb R$ definable in $\mathcal R$ such that $C=\{(u,x,t) \in \mathbb R^{m+n+1}\;|\; p(u,x)<t\}$.
	Set $s:=\sup\{p(u,x)\;|\; u \in P_{\alpha}\} \in \mathbb R \cup \{+\infty\}$. 
	Every element in $A_x$ is not smaller than $s$.
	The supremum $s$ is finite because $A_x$ is not empty. 
	The fiber $A_x$ is an infinite set because any real number lager than $s$ belongs to $A_x$.
	It is absurd.
	Therefore, $C$ is either of the form (ii) or (v) in Definition \ref{def:weak_cell}.
	
	We first treat the easier case.
	Assume that $C$ is of the form (ii).
	There exists a function $p:\mathbb R^{m+n} \to \mathbb R$ definable in $\mathcal R$ such that $C=\{(u,x,t) \in \mathbb R^{m+n+1}\;|\; t = p(u,x)\}$.
	By \cite[Chapter 1, Corollary 7.6]{vdD}, there are finitely many affine functions $p_1,\ldots, p_k$ and a decomposition $\mathbb R^{m+n} = C_1 \cup \cdots \cup C_k$ into basic semilinear sets such that the restriction of $p$ to $C_i$ coincides with $p_i$ for every $1 \leq i \leq k$.
	Since $p_i$ are affine, we can take linear maps $g_i: \mathbb R^n \to \mathbb R$ and affine maps $h_i:\mathbb R^m \to \mathbb R$ such that $p_i(u,x)=g_i(x)+h_i(u)$ for every $u \in \mathbb R^m$ and $x \in \mathbb R^n$.
	Set $D_i=h_i(F^n)$.
	Then the pairs $(g_i,D_i)$ satisfy conditions (a) and (b).
	Take an arbitrary point $x \in \pi(A)$.
	We have $$A_x=\bigcap_{u \in P_{\alpha}}C_{(u,x)} = \bigcap_{u \in P_{\alpha}}\{p(u,x)\}$$ for some $\alpha \in I$.
	Take a point $u \in P_{\alpha}$. We have $A_x=\{p(u,x)\}$ because $A_x$ is not empty.
	The point $(u,x)$ belongs to $C_i$ for some $1 \leq i \leq k$.
	We obtain $p(u,x)=g_i(x)+h_i(u)$.
	We have shown that condition (a) is satisfied.
	Condition (b) is satisfied by Lemma \ref{lem:basic11}.
	
	We next consider the case in which $C$ is of the form $\{(u,x,t) \in \mathbb R^{m+n+1}\;|\; (u,x) \in B \text{ and }p(u,x)<t<q(u,x)\}$, where $B \subseteq R^{m+n}$ and $p,q:\mathbb R^{m+n} \to \mathbb R$ are definable in $\mathcal R$.
	Apply \cite[Chapter 1, Corollary 7.6]{vdD} to $p$.
	There are finitely many affine functions $p_1,\ldots, p_k$ and a decomposition $\mathbb R^{m+n} = C_1 \cup \cdots \cup C_k$ into basic semilinear sets such that the restriction of $p$ to $C_i$ coincide with $p_i$, for every $1 \leq i \leq k$.
	We can take linear maps $g_i: \mathbb R^n \to \mathbb R$ and affine maps $h_i:\mathbb R^m \to \mathbb R$ such that $p_i(u,x)=g_i(x)+h_i(u)$ for every $u \in \mathbb R^m$ and $x \in \mathbb R^n$.
	We set $D_i=h_i(F^m)$ for $1 \leq i \leq k$.
	Note that $D_i$ is closed by Lemma \ref{lem:basic11}.
	We want to show that the pairs $(g_i,D_i)$ satisfy conditions (a) and (b).
	
	The first target is to show that condition (a) is fulfilled.
	Fix an arbitrary point $x \in \pi(A)$.
	We have $$A_x=\bigcap_{u \in P_{\alpha}}C_{(u,x)}=\bigcap_{u \in P_{\alpha}}\{t \in \mathbb R\;|\; p(u,x) < t < q(u,x)\}$$ for some $\alpha \in I$.
	We set $P_{\alpha,i}=P_{\alpha} \cap \{u \in \mathbb R^m\;|\; (u,x) \in C_i\}$ for $1 \leq i \leq k$.
	Note that $p(u,x)=g_i(x)+h_i(u)$ when $u \in P_{\alpha,i}$.
	Recall that $A_x$ is a singleton.
	Let $t_x$ be the unique element in $A_x$.
	Set $s_i=\sup\{h_i(u)\;|\; u \in P_{\alpha,i}\}$ for $1 \leq i \leq k$.
	We have $t_x \geq g_i(x)+ s_i$ for $1 \leq i \leq k$.
	If the inequality $t_x>g_i(x) + s_i$ holds for every $1 \leq i \leq k$, every element between $\max\{g_i(x)+s_i\;|\; 1 \leq i \leq k\}$ and $t_x$ should belong to $A_x$.
	This contradicts the fact that $A_x$ is a singleton.
	We can find $1 \leq i \leq k$ such that $f(x)=g_i(x)+s_i$.
	It is obvious that $s_i \in \mycl(h_i(P_{\alpha,i})) \subseteq \mycl(h_i(F^m))=D_i$.
	
	The satisfaction of condition (b) follows from Lemma \ref{lem:basic11}.
\end{proof}

\begin{lemma}\label{lem:locally_constant}
	Consider a d-minimal expansion of the set of reals.
	Let $E$ be a definable subset of $\mathbb R$ having an empty interior and $U$ be a definable open subset of $\mathbb R^n$.
	Let $f:U \to E$ be a definable map.
	Then, there exists a definable open subset $V$ of $U$ such that $V$ is dense in $U$ and the restriction of $f$ to $V$ is locally constant.
\end{lemma}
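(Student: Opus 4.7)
The plan is to first make $f$ continuous by shrinking $U$, then decompose $E$ into definable discrete pieces, and finally use continuity together with discreteness to obtain local constancy on a dense open set.

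\textbf{Step 1: Continuity reduction.} By Proposition \ref{prop:Miller}(2), there is a definable dense open subset $V_0 \subseteq U$ on which $f$ is continuous. It suffices to find a dense open $V \subseteq V_0$ on which $f$ is locally constant.

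\textbf{Step 2: Decomposition of $E$.} Since $E$ has empty interior, d-minimality implies $E$ is a union of finitely many discrete sets. By Corollary \ref{cor:very_basic}, we may write $E = E_1 \cup \cdots \cup E_k$ where each $E_i$ is definable and discrete.

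\textbf{Step 3: Candidate for $V$.} For $1 \leq i \leq k$, set $W_i := \myint(f^{-1}(E_i) \cap V_0)$ and $V := \bigcup_{i=1}^k W_i$. Clearly $V$ is definable and open in $U$. To see that $f|_V$ is locally constant, fix $x \in W_i$ and let $e = f(x) \in E_i$. Since $E_i$ is discrete, choose an open neighborhood $N$ of $e$ in $\mathbb R$ with $N \cap E_i = \{e\}$. By continuity of $f$ on $V_0$, there is an open neighborhood $M \subseteq W_i$ of $x$ with $f(M) \subseteq N$. But $f(M) \subseteq E_i$, so $f(M) \subseteq N \cap E_i = \{e\}$, and $f$ equals $e$ on $M$.

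\textbf{Step 4: Density of $V$.} This is the main point to verify. Suppose for contradiction that $V$ is not dense in $V_0$ (hence not in $U$). Then there is a nonempty open subset $B \subseteq V_0$ disjoint from $V$. Write $B = \bigcup_{i=1}^k (f^{-1}(E_i) \cap B)$. Each $f^{-1}(E_i) \cap B$ is definable, and an iterated application of Proposition \ref{prop:Miller}(1) shows that a finite union of definable subsets of $\mathbb R^n$ with empty interior again has empty interior. Since $B$ is open and nonempty, some $f^{-1}(E_i) \cap B$ must have nonempty interior. But $\myint(f^{-1}(E_i) \cap B) \subseteq \myint(f^{-1}(E_i) \cap V_0) = W_i \subseteq V$, contradicting $B \cap V = \emptyset$. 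Therefore $V$ is dense in $V_0$, and so dense in $U$.

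The only non-routine step is Step 4, and even there the work is essentially just invoking Proposition \ref{prop:Miller}(1); the argument as a whole is a straightforward combination of continuity, discreteness and the basic tameness of d-minimal structures.
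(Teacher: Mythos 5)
Your proof is correct, but it organizes the argument differently from the paper. The paper takes $V$ to be the set of all points of $U$ admitting an open box on which $f$ is constant, and proves density locally: for each box $B \subseteq U$ it applies d-minimality to the single definable set $Z = f(B) \subseteq E$, extracts an isolated point $z$ of $Z$ via Lemma \ref{lem:basic0}, and pulls back a small interval around $z$ to obtain a nonempty open subset of $B$ on which $f$ is constant. You instead decompose $E$ globally, once and for all, into definable discrete pieces $E_1, \ldots, E_k$ (via Corollary \ref{cor:very_basic}), set $V = \bigcup_i \myint\bigl(f^{-1}(E_i) \cap V_0\bigr)$, and obtain density from Proposition \ref{prop:Miller}(1). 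Both routes rest on the same d-minimality input (a definable subset of $\mathbb R$ with empty interior is a finite union of discrete sets); the paper's version needs only the purely topological Lemma \ref{lem:basic0} and no definable decomposition of $E$, while yours trades that for Corollary \ref{cor:very_basic} and an additional use of Proposition \ref{prop:Miller}(1), in exchange for an explicit global description of $V$. Your Steps 3 and 4 check out: $f(M) \subseteq N \cap E_i = \{e\}$ gives local constancy, and the nesting $\myint\bigl(f^{-1}(E_i) \cap B\bigr) \subseteq W_i \subseteq V$ correctly closes the density contradiction. (Note that both you and the paper invoke Proposition \ref{prop:Miller}, stated for expansions of the ordered group of reals, under the lemma's weaker hypothesis of an expansion of the set of reals; this looseness is inherited from the paper and is harmless in the intended application.)
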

\begin{proof}
	We first reduce to the case in which $f$ is continuous.
	There exists a definable dense open subset $U'$ of $U$ such that the restriction of $f$ to $U'$ is continuous by Proposition \ref{prop:Miller}(2).
	We may assume that $f$ is continuous by considering the restriction of $f$ to $U'$ instead of $f$ by using Proposition \ref{prop:Miller}(1).
	
	We set 
	\begin{align*}
		V &=\{x \in U\;|\; \text{there exists open box } B \subseteq U \text{ containing the point }x\\
		&\text{ such that } f \text{ is constant in }B\}.
	\end{align*}
	It is obvious that $V$ is open and definable.
	The remaining task is to show that $V$ is dense in $U$.
	
	Take an arbitrary point $x \in U$.
	We fix an arbitrary small box $B$ containing the point $x$ and contained in $U$.
	We have only to prove that $B \cap V \neq \emptyset$.
	It is obvious when $x \in V$.
	We assume that $x \notin V$.
	Set $Z=f(B)$.
	There exists an isolated point $z$ of $Z$ by Lemma \ref{lem:basic0} because $Z$ has an empty interior and the structure is d-minimal.
	Take a sufficiently small open interval $I$ containing the point $z$ such that $I \cap Z = \{z\}$.
	The inverse image $W=f^{-1}(I) \cap B$ of $f|_B$ is open because $f$ is continuous.
	The restriction of $f$ to $W$ is constant.
	It means that $W \subseteq V$ and this implies that $B \cap V \neq \emptyset$.
\end{proof}

We are now ready to prove Theorem \ref{thm:non_field_type}.
\begin{proof}[Proof of Theorem \ref{thm:non_field_type}]
	Recall that $(\mathcal R, D \cup E)^{\#}$ is d-minimal by Theorem \ref{thm:semi-d-minimal}.
	
	We first reduce to the case in which $m=1$.
	Assume that the theorem holds for $m=1$.
	Let $\pi_i:\mathbb R^m \to \mathbb R$ be the projection onto the $i$-th coordinate for $1 \leq i \leq m$.
	There exists a definable closed subset $X_i$ of $U$ with empty interior such that the restriction of $\pi_i \circ f$ to $U \setminus X_i$ is locally affine for every $1 \leq i \leq m$.
	The union $\bigcup_{i=1}^m X_i$ has an empty interior by Proposition \ref{prop:Miller}(1).
	The restriction of $f$ to $U \setminus \left(\bigcup_{i=1}^m X_i\right)$ is locally affine and $U \setminus \left(\bigcup_{i=1}^m X_i\right)$ is a dense open subset of $U$.
	We have succeeded in reducing to the case in which $m=1$.
	
	We next reduce to the case in which there exist a linear function $g$ and a subset $F$ of $\mathbb R$ definable in $(\mathcal R, D \cup E)^{\#}$ with empty interior such that, for every $x \in U$,  $f(x)=g(x)+d$ for some $d \in F$.
	By Lemma \ref{lem:locally_constant}, there exist finitely many pairs $(g_i:\mathbb R^n \to \mathbb R,F_i)_{i=1}^k$ of linear maps $g_i$ and subsets $F_i$ of $\mathbb R$ such that, 
	\begin{enumerate}
		\item[(a)] for each $x \in X$, the equality $f(x)=g_i(x)+d$ holds for some $1 \leq i \leq k$ and $d \in F_i$ and
		\item[(b)] the sets $F_i$ are definable in $(\mathcal R, D \cup E)^{\#}$ and have empty interiors for all $1 \leq i \leq k$.
	\end{enumerate}
	We define $W_i$ as follows for $1 \leq i \leq k$:
	\begin{align*}
		W_i&:=\{x \in U\;|\; \forall j \text{ with }\ 1 \leq j<i \ \forall d \in F_j\ g_j(x)+d \neq f(x) \\
		&\quad \text{ and } \exists d \in F_i\ g_i(x)+d=f(x)\}.
	\end{align*}
	We get the definable decomposition $U=\bigcup_{i=1}^kW_i$ of $U$ by property (a).
	Set $U_i=\myint(W_i)$ and $Z_i=W_i \setminus U_i$ for $1 \leq i \leq k$.
	Note that we may have $U_i=\emptyset$, but the following argument is valid even in the case which $U_i=\emptyset$.
	Each $Z_i$ has an empty interior and the union $\bigcup_{i=1}^k Z_i$ has an empty interior by Proposition \ref{prop:Miller}(1).
	If the theorem holds for every restriction of $f$ to $U_i$, there exists a definable  closed subset $Z'_i$ of $U_i$ such that the restriction of $f$ to $U_i \setminus Z'_i$ is locally affine and $Z'_i$ has an empty interior.
	The restriction of $f$ to $U \setminus \bigcup_{i=1}^m (Z_i \cup Z'_i)$ is locally affine and  the set $U \setminus \bigcup_{i=1}^m (Z_i \cup Z'_i)$ is a definable dense open subset of $U$ by Proposition \ref{prop:Miller}(1).
	We have reduced to the case in which there exist a linear function $g$ and a subset $F$ of $\mathbb R$ definable in $(\mathcal R, D \cup E)^{\#}$ with empty interior such that, for every $x \in U$,  $f(x)=g(x)+d$ for some $d \in F$.
	
	By Proposition \ref{prop:Miller}(3), we can find a definable function $\tau:U \to F$ such that $f(x)=g(x)+\tau(x)$ for every $x \in U$.
	We can take a definable dense open subset $V$ of $U$ such that the restriction of $\tau$ to $V$ is locally constant by Lemma \ref{lem:locally_constant}.
	The restriction of $f$ to $V$ is obviously locally affine.
\end{proof}

\section{Counterexamples}\label{sec:counterexamples}
Theorem \ref{thm:semi-d-minimal} does not hold if we drop the assumption that $E$ is compact.
We give two examples which illustrate it in this section.
The following proposition shows that Theorem \ref{thm:semi-d-minimal} does not hold if we drop the assumption that $E$ is bounded even when $E$ is closed. 
In fact, we have only to get $D$ as a bounded countably infinite subset of $\mathbb R$ which is not a union of finitely many discrete sets.
The Cantor ternary set is an example of such sets.

\begin{proposition}\label{prop:ex1}
	Let $D$ be a bounded countably infinite subset of $\mathbb R$.
	There exists a discrete closed subset $E$ of $\mathbb R$  defined by an increasing sequence of real numbers $(a_n)_{n \in \mathbb N}$ such that $D$ is definable in $(\mathbb R,<,+,0,E)$.
\end{proposition}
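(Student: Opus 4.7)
The plan is to build $E$ so that $D$ appears as the set of ``small'' positive differences between elements of $E$, while all other differences are large enough to be filtered out by a numerical threshold. Since $D$ is bounded and countably infinite, I would first fix real numbers $a < \inf D$ and $L > \sup D - a$, so that $D - a \subseteq (0, L)$. Since it suffices to define $D - a$ from $E$ (then $D = (D - a) + a$ uses $a$ as one more parameter), enumerate $D - a$ without repetition as $\{d'_n : n \in \mathbb N\}$.

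Next, I would choose a large spacing constant, say $\lambda = 3L$, and set
\[
a_{2n} = n\lambda, \qquad a_{2n+1} = n\lambda + d'_n
\]
for every $n \in \mathbb N$. Since $0 < d'_n < L < \lambda$, the sequence $(a_k)_{k \in \mathbb N}$ is strictly increasing and tends to $+\infty$, so $E := \{a_k : k \in \mathbb N\}$ is a discrete closed subset of $\mathbb R$ defined by an increasing sequence of real numbers, as required.

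The key claim is the identity
\[
D - a = \{e - e' : e, e' \in E,\ 0 < e - e' < L\}.
\]
The inclusion $\supseteq$ is immediate from $a_{2n+1} - a_{2n} = d'_n \in (0, L)$. For the inclusion $\subseteq$, I would run a short case analysis on pairs $(a_k, a_j)$ with $k > j$: the pair $(a_{2n+1}, a_{2n})$ contributes exactly $d'_n \in (0, L)$; the pair $(a_{2n+2}, a_{2n+1})$ contributes $\lambda - d'_n > 2L$; and every pair with $k - j \geq 2$ has difference at least $\lambda - L = 2L$. Hence only the intended ``consecutive odd--even'' pairs fall inside the window $(0, L)$. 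This bookkeeping is the main (and only nontrivial) step, and it is routine once $\lambda$ is chosen large enough compared with $L$.

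Having the identity, $D - a$ is definable in $(\mathbb R, <, +, 0, E)$ using the parameter $L$, and therefore $D = (D - a) + a$ is definable using the parameters $a$ and $L$. This completes the proposed proof.
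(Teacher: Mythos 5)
Your proposal is correct and follows essentially the same strategy as the paper's proof: interleave an arithmetic "marker" subsequence with translates of an enumeration of (a shift of) $D$, choose the spacing large enough that the only differences of elements of $E$ falling in a fixed window $(0,L)$ are the intended ones, and then define $D$ by an existential formula about such differences. The only variations are cosmetic (shifting by a parameter $a$ rather than assuming $D\subseteq(0,N)$, and the choice of spacing constant), and your case analysis of the differences is sound.
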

\begin{proof}
	Since $D$ is bounded, we may assume that $D$ is contained in $(0,\infty)$ by shifting $D$.
	We can take a natural number $N$ such that $D \subseteq (0,N)$ because $D$ is bounded.
	Since $D$ is countably infinite, we can take a sequence of real numbers $(d_n)_{n \in \mathbb N}$ so that $D=\{d_n\;|\; n \in \mathbb N\}$.
	We define a sequence of real numbers $(a_n)_{n \in \mathbb N}$ as follows:
	\[
	a_n = \left\{\begin{array}{lc}
	N(n+1) & \text{ if } n \text{ is odd,}\\
	Nn+d_{n/2} &  \text{ if } n \text{ is even.}
	\end{array}\right.
	\]
		
	We investigate the properties of the sequence $(a_n)_{n \in \mathbb N}$.
	It is obvious that $a_n$ is positive for every $n$ and the sequence is an increasing sequence.
	It is also obvious that $\lim_{n \to \infty}a_n=\infty$.
	Furthermore, the following claim holds:
	\medskip
	
	\textbf{Claim. } Let $n,m$ be natural numbers. The inequality $0<a_n-a_m <N$ holds if and only if $n=m+1$ and $m$ is odd.
	\begin{proof}[Proof of Claim]
		Since $(a_n)_{n \in \mathbb N}$ is an increasing sequence, we have two consequences.
		Firstly, it is obvious that $a_n-a_m>0$ if and only if $n>m$.
		Secondly, we have $a_n-a_m> a_{m+1}-a_m$ when $n>m+1$.
		It is easy to check that $a_{m+2}-a_m>N$ using the inequalities $-N<d_{k+1}-d_k<N$ for every $k$.
		Therefore, we have only to check that $a_{m+1}-a_m<N$ if and only if $m$ is odd.
		
		We first consider the case in which $m$ is odd.
		We have $a_{m+1}-a_m =N(m+1)+d_{(m+1)/2}-N(m+1)=d_{(m+1)/2}<N$.
		When $m$ is even, we have $a_{m+1}-a_m =N(m+2)-(Nm+d_{m/2})=2N-d_{m/2}>N$ because $d_{m/2}<N$.
		We have proven the claim.
	\end{proof}
	
	We have finished the preparation.
	We set $E:=\{a_n\;|\;n \in \mathbb N\}$, which is the set we are looking for, and we set $\mathcal R=(\mathbb R,<,+,E)$.
	Since $(a_n)_{n \in \mathbb N}$ is an increasing sequence and $E:=\{a_n\;|\;n \in \mathbb N\}$, it is easy to show that $E$ is discrete and closed.
	We omit the details.
	Consider the formula $$\phi(x)= (0<x<N) \wedge (\exists x_1, x_2 \in E\ x=x_1-x_2).$$
	We prove that $D$ is defined by the formula $\phi(x)$ in $\mathcal R$.
	When $x \in D$, there exists a natural number $n$ such that $d_n=x$.
	Set $x_1:=a_{2n}=2Nn+d_n$ and $x_2:=a_{2n-1}=2Nn$.
	We have $x_1,x_2 \in E$ and $x=x_1-x_2$.
	It is obvious that $0<x<N$.
	It implies that $\mathcal R \models \phi(x)$.
	
	We next consider the case in which $\mathcal R \models \phi(x)$.
	By the definition of the formula $\phi(x)$, we have $0<x<N$ and there exist $x_1, x_2 \in E$ such that $x=x_1-x_2$.
	There are two natural numbers $n$ and $m$ with $x_1=a_n$ and $x_2=a_m$ by the definition of $E$.
	By Claim, the natural number $m$ is odd and $n=m+1$ because $0<a_n-a_m<N$.
	It implies that $x=a_{m+1}-a_m =N(m+1)+d_{(m+1)/2}-N(m+1)=d_{(m+1)/2} \in D$.
	We have proven that $D$ is defined by the formula $\phi(x)$.
\end{proof}

Theorem \ref{thm:semi-d-minimal} does not hold if we drop the assumption that $E$ is closed even when $E$ is bounded and discrete.
In fact, let $E$ be a bounded discrete set whose closure is not a union of finitely many discrete sets.
This implies that $(\mathbb R,<,+,0,E)$ is not d-minimal because the closure is definable in the structure.
The following is an example of such a set $E$.
\begin{example}
	Let $\mathfrak S$ be the set of sequences of elements in $\{0,1,2\}$ of finite length defined so that $\sigma=(i_1,\ldots, i_n) \in \mathfrak S$ if and only if
	\begin{itemize}
		\item $i_n \neq 0$;
		\item $i_j \neq 1$ for each $1 \leq j  < n$.
	\end{itemize} 
	We have $i_n = 1,2$ when $\sigma=(i_1,\ldots, i_n) \in \mathfrak S$.
	
	We fix $\sigma=(i_1,\ldots, i_n) \in \mathfrak S$.
	We put $u(\sigma)=\sum_{j=1}^{n-1} \frac{i_j}{3^j}$ and $v(\sigma)=\sum_{j=1}^n \frac{i_j}{3^j}.$
	For any $k \in \mathbb N$, we set $a_{\sigma,k}=v(\sigma)- \frac{(-1)^{i_n} }{k \cdot 3^{n+1}}$ and $E_{\sigma}=\{a_{\sigma,k}\;|\; k \in \mathbb N\}$.
	We also put $I_{\sigma}=(u(\sigma)+\frac{1}{3^n}, u(\sigma)+\frac{1}{3^n}+\frac{1}{3^{n+1}})$ when $i_n=1$ and $I_{\sigma}=(u(\sigma)+\frac{1}{3^n}+\frac{2}{3^{n+1}}, u(\sigma)+\frac{2}{3^n})$ when $i_n=2$.
	It is easy to prove that 
	\begin{equation}
	I_{\sigma} \cap I_{\sigma'}= \emptyset \label{eq:empty}
	\end{equation}
	 if $\sigma, \sigma' \in \mathfrak S$ and $\sigma \neq \sigma'$.
	It is also obvious that $E_{\sigma}$ is contained in the open interval $I_{\sigma}$.
	
	Put $D=\{v(\sigma)\;|\; \sigma \in \mathfrak S\}$ and $E=\bigcup_{\sigma \in \mathfrak S}E_{\sigma}$.
	Then, 
	\begin{itemize}
		\item the set $D$ does not have an isolated point;
		\item the set $E$ is discrete and bounded;
		\item the closure of $E$ contains the set $D$.
	\end{itemize}
	The first and third assertions imply that the closure of $E$ is not a union of finitely many discrete sets by Lemma \ref{lem:basic0} and Lemma \ref{lem:basic2}.
	
	We prove them.
	We first show easy  assertions.
	The set $E$ is obviously bounded.
	Indeed, it is contained in the open interval $(0,1)$.
	We next demonstrate that $E$ is discrete. 
	Take an arbitrary element $x \in E$.
	The point $x$ belongs to $E_{\sigma}$ for some $\sigma \in \mathfrak S$.
	We have $E \cap I_{\sigma}=E_{\sigma}$ by the definition of $E$ and equality (\ref{eq:empty}).
	Since $E_{\sigma}$ is discrete, we can take an open interval $I$ containing the point $x$ such that $E_ {\sigma} \cap I = \{x\}$.
	We have $E \cap I \cap I_{\sigma} = \{x\}$.
	It means that $x$ is an isolated point in $E$ and $E$ is a discrete set.
	Since $\lim_{k \to \infty}a_{\sigma,k}=v(\sigma)$, the set $D$ is contained in the closure of $E$.
	
	The remaining task is to show that $D$ does not have an isolated point.
	Let $x$ be an arbitrary point in $D$.
	We show that $x$ is not isolated.
	The point $x$ is of the form $x=v(\sigma)$ for some $\sigma=(i_1,\ldots, i_n) \in \mathfrak S$.
	We first consider the case in which $i_n = 1$.
	For any $k$, let $\tau_k$ be the concatenation of $(i_1,\ldots, i_{n-1})$ with the sequence of length $k$ whose first element is zero and others are twos.
	It is easy to check that $\tau_k \in \mathfrak S$.
	We omit it.
	The sequence $(v(\tau_k))_{k \in \mathbb N}$ is a Cauchy sequence converging to $v(\sigma)$.
	It implies that $x$ is not an isolated point in this case.
	We next consider the case in which $i_n = 2$.
	For any $k$, let $\tau'_k$ be the concatenation of $\sigma$ with the sequence of length $k$ whose last element is one and others are zeros.
	We have $\tau'_k \in \mathfrak S$ and $(v(\tau'_k))_{k \in \mathbb N}$ is a Cauchy sequence converging to $v(\sigma)$.
	The point $x$ is not isolated in $D$ also in this case.
\end{example}

\section{Concluding remarks}\label{sec:concluding}
We have proven that the expansion $(\mathbb R, <,+,0,\Lambda,E)$ of the $\mathbb R$-vector space structure over $\mathbb R$ by a Cauchy sequence $E$ is d-minimal.
Here, $\Lambda$ denotes the set of all real scalar functions on $\mathbb R$.
Our example is an extremely tame structure. 
Every definable continuous map on an open set is locally affine in a dense open subset of the domain.
The structures $(\mathbb R, <,+,0,\Lambda,E)$ falls into `type A, affine' category under the classification system in \cite{HW}. 

The expansion of the ordered field of reals by the Cauchy sequence $\{1/n\;|\; n \in \mathbb N\}$ is interdefinable with $(\mathbb R,<,+,\cdot,0,1,\mathbb Z)$.
Every continuous function is definable in the structure $(\mathbb R,<,+,\cdot,0,1,\mathbb Z)$ (See \cite[Chapter 1(2.6)]{vdD}) and it falls into `type C' under the classification system in \cite{HW}. 
The expansion of the ordered field of reals by a Cauchy sequence is possibly wild.

A natural question is whether there exists a d-minimal expansion of the ordered group of reals in which a given Cauchy sequence is definable and which has an intermediate tameness fallen into `type A, field-type' category under the classification system in \cite{HW}. 
The author has neither an affirmative nor negative witness to this question.

\end{document}